\newtheorem{theorem}{Theorem}
\newtheorem{lemma}[theorem]{Lemma}
\begin{document}
\title{A lowest-order mixed finite element method for the elastic transmission eigenvalue problem}

\author{Yingxia Xi}
\address{School of Science, Nanjing University of Science and Technology, Nanjing 210094, People's Republic of China}
\email{xiyingxia@njust.edu.cn}
\thanks{The research of Y. Xi is supported in part by Start-up Fund for Scientific Research, Nanjing University of Science and Technology (No. AE89991/109).}

\author{Xia Ji}
\address{LSEC, Institute of Computational Mathematics and Scientific/Engineering Computing, Academy of Mathematics and System Sciences, Chinese Academy of Sciences, Beijing 100190, People's Republic of China}
\email{jixia@lsec.cc.ac.cn}
\thanks{The research of X. Ji is partially supported by the National Natural Science Foundation of China with Grant Nos. 11271018 and 91630313, and National Center for Mathematics and Interdisciplinary Sciences, Chinese Academy of
Sciences.}
\subjclass[2000]{65N25,65N30,47B07}
\keywords{Transmission eigenvalue problem, Elastic wave equation, Mixed finite element method}

\begin{abstract}
The goal of this paper is to develop numerical methods computing a few smallest elastic interior
transmission eigenvalues, which are of practical importance in inverse elastic scattering theory. The problem is
challenging since it is nonlinear, non-self-adjoint, and of fourth order. In this paper, we construct a lowest-order mixed finite element method which is close to the Ciarlet-Raviart mixed finite element method. This scheme is based on Lagrange finite elements and is one of the less expensive methods in terms of the amount of degrees of freedom.
Due to the non-self-adjointness, the discretization of elastic transmission eigenvalue problem leads to a non-classical mixed method which does not fit into the framework of classical theoretical analysis. In stead, we obtain the convergence analysis based on the spectral approximation theory of compact operators.
Numerical examples are presented to verify the theory. Both real and complex eigenvalues can be obtained.
\end{abstract}

\maketitle

\section{Introduction}
Transmission eigenvalue problem is very important in the qualitative reconstruction in the inverse scattering theory of inhomogeneous media. For example, the eigenvalues can be used to estimate the physical properties of scattering object \cite{CakoniEtal2010IP, Sun2011IP}. The transmission eigenvalue problem is non-selfadjoint and is not covered by the standard theory of partial differential equations. It is numerically challenging because of the nonlinearity and the complicated spectral without a priori information. In most cases, the continuous problem is degenerate with an infinite dimensional eigenspace associated with the zero eigenvalue, which has no physical meaning and makes it difficult to be solved. There are different types of transmission eigenvalue problems, such as acoustic transmission eigenvalue problem, electromagnetic transmission eigenvalue problem, elastic transmission eigenvalue problem, etc.

Since 2010, effective numerical methods for the acoustic transmission eigenvalues have been developed by many researchers \cite{ColtonMonkSun2010, Sun2011SIAMNA,JiSunTurner2012ACMTOM, AnShen2013JSC, Geng2016, Kleefeld2013IP, JiSunXie2014JSC,CakoniMonkSun2014CMAM, LiEtal2014JSC, YangBiLiHan2016,
Camano2018,HuangEtal2016JCP,HuangEtal2017arXiv,SunZhou2016,XiJiZhang2018}. There are also much fewer works for the electromagnetic
transmission eigenvalue problem \cite{HuangHuangLin2015SIAMSC,MonkSun2012SIAMSC,SunXu2013IP}.
The goal of this paper is to develop effective numerical methods for transmission eigenvalue problem of elastic waves. Compared with the acoustic transmission eigenvalue problem, the eigenfunctions are vectors which make it more difficult to design convergent methods. There exist very limited numerical methods for elastic transmission eigenvalue problem,
To the best of our knowledge, there are only two works on numerical algorithm.
In \cite{Ji2017}, the elastic transmission eigenvalue problem is reformulated as the combination of a nonlinear function and a series of fourth order self-adjoint eigenvalue problems.
The nonlinear function values correspond to generalized eigenvalues of fourth order self-adjoint eigenvalue problems which can be discretized by $H^2$ conforming finite element methods.
The roots of the nonlinear function are the transmission eigenvalues. The authors apply the secant iterative method to compute the transmission eigenvalues. However, at each step, a fourth-order eigenvalue problem needs to be
solved and only real eigenvalues can be captured.
In \cite{XiJi2018}, an interior penalty discontinuous Galerkin method using C$^0$ Lagrange elements (C$^0$IP) is proposed for the elastic transmission eigenvalue problem.
They are simpler than $C^1$ elements and come in a natural hierarchy. It's much easier to be implemented. However, this method needs two sets of degrees of freedom at the common edge of adjacent grid cells. When the polynomial degree $p$ increases,  the degrees of freedom increase remarkably. Although the existence of transmission eigenvalues
is beyond our concern, we want to remark that there exist only a few studies on the existence of the elasticity transmission eigenvalue
problem \cite{Charalambopoulos2002JE, CharalambopoulosAnagnostopoulos2002JE,
BellisGuzina2010JE, BellisCakoniGuzina2013IMAAM}.  We hope that the numerical results can give some hints on the analysis of the elasticity transmission eigenvalue
problem.

In this paper, we construct a mixed finite element method for elastic transmission eigenvalue problem. For acoustic transmission eigenvalue problem, the related works for mixed element method can be referred to \cite{Camano2018,ColtonMonkSun2010,JiSunTurner2012ACMTOM,XiJiZhang2018,YangBiLiHan2016}.
The mixed scheme in \cite{Camano2018, JiSunTurner2012ACMTOM} which is close to the Ciarlet-Raviart discretization of biharmonic problems is based on Lagrange finite element method. For the nonzero transmission eigenvalues, this scheme is equivalent to the one proposed in \cite{ColtonMonkSun2010}. However, the scheme in \cite{Camano2018,JiSunTurner2012ACMTOM} can eliminate the zero transmission eigenvalues which has an infinite dimensional space and has no physical meaning. The mixed formulation in terms of three scaler fields and a spectral-mixed method is constructed in \cite{YangBiLiHan2016}. In \cite{XiJiZhang2018}, the authors propose a multi-level mixed formulation in terms of seven scaler fields. An equivalent linear mixed formulation of transmissoin eigenvalue problem which doesn't produce spurious modes even on non-convex domains is constructed.
The proposed scheme admits a natural nested discretization, based on that a multi-level
scheme is built. Optimal convergence rate and optimal computational cost can be obtained.

The mixed scheme for elastic transmission eigenvalue problem proposed in this paper also has similiarity to Ciarlet-Raviart discretization of biharmonic problems. This scheme is based on Lagrange finite elements and is one of the less expensive methods in terms of the amount of degrees of freedom. Besides, the proposed mixed scheme can eliminate the zero transmission eigenvalue. Because of the non-self-adjointness and non-linearity, elastic transmission eigenvalue problem leads to the non-classical mixed method which is not covered by standard theoretical analysis of mixed element method (the detailed description referred to Section \ref{ErrorEstimate}). Here we presented the convergence analysis under the framework of spectral approximation theory of compact operators \cite{BabuskaOsborn1991,Osborn1975MC} and the error analysis of a mixed finite element method for solving the Stokes problem \cite{Girault1986}.

The rest of this paper is organized as follows. In Section \ref{ETEP}, we introduce the problem, the mixed formulation and the variational formula for elastic transmission eigenvalue problem. In Section \ref{ErrorEstimate}, we introduce the solution operator and analyze the well-posedness of the operator. The discretization scheme is also presented and the convergence is proved. Numerical examples are presented in Section \ref{Experiments}.

\section{The elasticity transmission eigenvalue problem}\label{ETEP}
We begin with the notations used throughout this paper. All vectors will be denoted in bold script. Let $\boldsymbol x=(x, y)^\top\in\mathbb R^2$, $D\subset\mathbb{R}^2$ be a bounded convex Lipschitz domain, $\boldsymbol{u}(\boldsymbol{x})=(u_1(\boldsymbol{x}), u_2(\boldsymbol{x}))^\top$ be the displacement vector of the wave field and $\nabla\boldsymbol{u}$ be the displacement gradient tensor
\[
 \nabla\boldsymbol{u}=\begin{bmatrix}
                       \partial_x u_1 & \partial_y u_1\\
                       \partial_x u_2 & \partial_y u_2
                      \end{bmatrix}.
\] The strain tensor $\varepsilon(\boldsymbol{u})$ is given by
\[
\varepsilon(\boldsymbol{u})=\frac{1}{2}(\nabla\boldsymbol{u}
+(\nabla\boldsymbol{u})^\top),
\]
and the generalized Hooke law gives the stress tensor $\sigma(\boldsymbol{u})$
\[
 \sigma(\boldsymbol{u})=2\mu\varepsilon(\boldsymbol{u})+\lambda {\rm
tr}(\varepsilon(\boldsymbol{u})){\rm I},
\]
where the Lam\'{e} parameters $\mu, \lambda$ are two constants satisfying
$\mu>0, \lambda+\mu>0$, and ${\rm I}\in\mathbb{R}^{2\times 2}$ is the identity
matrix. Writing the above equation out, we have
\begin{equation}\label{DefSigma}
 \sigma(\boldsymbol{u})=\begin{bmatrix}
  (\lambda+2\mu)\partial_x u_1 + \lambda \partial_y u_2 & \mu (\partial_y u_1 +
\partial_x u_2)\\
\mu (\partial_x u_2 + \partial_y u_1) & \lambda\partial_x u_1 +
(\lambda+2\mu)\partial_y u_2
 \end{bmatrix}.
\end{equation}

The reduced Navier equation describes the two-dimensional elastic wave problem: Find
$\boldsymbol{u}$ with zero trace on $\partial D$, such that
\begin{equation}\label{ElasticityLHS1}
 \nabla\cdot\sigma(\boldsymbol{u})+\omega^2 \rho \boldsymbol{u}={\boldsymbol 0},
\quad\text{in}~D \subset \mathbb{R}^2,
\end{equation}
where $\omega>0$ is the angular frequency and $\rho$ is the mass density.




Now we are ready to give the definition of elastic transmission eigenvalue problem. Let $\mu_0, \lambda_0$ be the Lam\'{e} parameters of the free space
and the domain $D$ be a
homogeneous and isotropic elastic medium with Lam\'{e}
constants $\lambda_1$ and $\mu_1$. The transmission eigenvalue problem for the
elastic waves is: Find $\omega^2\neq0$ such that there exist non-trivial
solutions $\boldsymbol{u}, \boldsymbol{v}$ satisfying
\begin{equation}\label{tep}
\left\{
\begin{array}{rcl}
\nabla\cdot\sigma_0(\boldsymbol{u})+\omega^2 \rho_0 \boldsymbol{u}={\boldsymbol 0} &\quad\text{in} ~
D,\\
\nabla\cdot\sigma_1(\boldsymbol{v})+\omega^2 \rho_1 \boldsymbol{v}={\boldsymbol 0} &\quad\text{in} ~
D,\\
\boldsymbol{u}=\boldsymbol{v} &\quad\text{on} ~ \partial D,\\
\sigma_0(\boldsymbol{u}){\boldsymbol \nu}=\sigma_1(\boldsymbol{v}){\boldsymbol \nu} &\quad\text{on} ~
\partial D,
\end{array}
\right.
\end{equation}

where
\[
 \sigma_i(\boldsymbol{u})=\begin{bmatrix}
  (\lambda_i+2\mu_i)\partial_x u_1 + \lambda_i \partial_y u_2 & \mu_i
(\partial_y u_1 + \partial_x u_2)\\
\mu_i (\partial_x u_2 + \partial_y u_1) & \lambda_i\partial_x u_1 +
(\lambda_i+2\mu_i)\partial_y u_2
 \end{bmatrix},\quad i=0,1,
\]
and $\sigma {\boldsymbol \nu}$ denotes the matrix multiplication of the
stress tensor $\sigma$ and the unit outward normal $\boldsymbol \nu$.

In this paper,  the case of equal elastic
tensors \cite{BellisCakoniGuzina2013IMAAM}, i.e., $\rho_0 \ne \rho_1, \sigma_0 =
\sigma_1=\sigma$ is considered. In addition, we assume the following inequalities  for the mass
density distributions
\begin{equation}\label{pP}
q \le \rho_0({\boldsymbol x}) \le Q, \quad q_* \le
\rho_1({\boldsymbol x}) \le Q_*, \quad {\boldsymbol x} \in D,
\end{equation}
where $q, q_*$ and $Q, Q_*$ are positive constants and also assume that the two density distributions are "non-intersecting"\cite{BellisCakoniGuzina2013IMAAM}, i.e.
\begin{equation}
Q\leq1\leq q_*  \ \ \ \ or \ \ \ \ Q_*\leq1\leq q  \nonumber.
\end{equation}
We discuss the case $Q_*\leq1\leq q$ for illustration, and the case $Q\leq1\leq q_*$ is analogous.
Furthermore, denote by
\begin{eqnarray}  \nonumber
&\rho_{min}:=\min\limits_{{\boldsymbol x}\in D}(\rho_0({\boldsymbol x})-\rho_1({\boldsymbol x}))>0,& \\ \nonumber
&\rho_{max}:=\max\limits_{{\boldsymbol x}\in D}(\rho_0({\boldsymbol x})-\rho_1({\boldsymbol x})).& \\ \nonumber
\end{eqnarray}
It should be noted that we need $\rho_{min}>0$.

Define the Sobolev space
$$V=\{\boldsymbol{\phi} \in (H^2(D))^2: {\boldsymbol \phi} = {\boldsymbol 0} \text{
and } \sigma({\boldsymbol \phi}) {\boldsymbol \nu} = {\bf 0} ~ \text{on}~ \partial D \}.$$
Introducing the new variables $\boldsymbol{w}=\boldsymbol{u}-\boldsymbol{v}\in V$ and $\boldsymbol{p}=-\omega^2\boldsymbol{v}$, the system (\ref{tep})
can be written as
\begin{equation}\label{Problem}
\left\{
\begin{array}{rcll}
-\nabla\cdot\sigma(\boldsymbol{w})+(\rho_0-\rho_1)\boldsymbol{p}&=&\omega^2\rho_0\boldsymbol{w},&{\rm in}\ D,\\
\nabla\cdot\sigma(\boldsymbol{p})+\omega^2\rho_1\boldsymbol{p}&=&\boldsymbol{0}, &{\rm in}\ D.
\end{array}
\right.
\end{equation}
Further, dividing by $(\rho_0-\rho_1)$ and taking $\nabla\cdot\sigma$ in the first equation of (\ref{Problem}), we obtain $\boldsymbol{w}$ satisfying
\begin{equation}
-\nabla\cdot\sigma\left(\frac{\nabla\cdot\sigma(\boldsymbol{w})}{\rho_0-\rho_1}\right)+\nabla\cdot\sigma(\boldsymbol{p})=\omega^2\nabla\cdot\sigma(\frac{\rho_0\boldsymbol{w}}{\rho_0-\rho_1}).
\end{equation}
Then, using the second equation of (\ref{Problem}), $\boldsymbol{w}$ satisfies the equation
\begin{equation}\label{Fourth_regularity}
\nabla\cdot\sigma\left(\frac{\nabla\cdot\sigma(\boldsymbol{w})}{\rho_0-\rho_1}\right)+\omega^2\rho_1\boldsymbol{p}+\omega^2\nabla\cdot\sigma(\frac{\rho_0\boldsymbol{w}}{\rho_0-\rho_1})={\boldsymbol 0},~~~{\rm in}\ D,
\end{equation}
and the boundary conditions
\begin{equation}
{\boldsymbol w} = \sigma({\boldsymbol w}) {\boldsymbol \nu} ={\boldsymbol 0},~~~{\rm on}\ \partial D.
\end{equation}

Following the discussion in \cite{Ji2017}, we have $V\subset (H_0^2(D))^2$. That is, the boundary condition $\sigma({\boldsymbol w}) {\boldsymbol \nu} ={\boldsymbol 0}$ implies that all the first derivatives of ${\boldsymbol w}$ vanish on the boundary $\partial D$.  Further,  assume that the difference of mass density $\rho_0({\boldsymbol x})-\rho_1({\boldsymbol x})$ is smooth enough. For the fourth order equation \eqref{Fourth_regularity} with homogeneous boundary conditions, we can obtain $\boldsymbol{w}\in (H^3(D))^2\cap V$ (see, for instance \cite{Camano2018} and \cite{Girault1986}), which, together with \eqref{Problem}, implies that $\boldsymbol{p}\in (H^1(D))^2$.

Before introducing the weak variational formulation, we denote the inner product of two square matrices $A$ and $B$
\begin{equation}(A,B)=\int_D A:B \ {\rm d}{\boldsymbol x} = \int_D {\rm tr}(AB^\top) {\rm d}{\boldsymbol x},
\end{equation}
where $A:B={\rm tr}(AB^\top)$, i.e., the Frobenius inner product of $A$ and $B$.

Multiplying equations (\ref{Problem}) by suitable test functions and integrating by parts, the corresponding weak formulations are obtained: Find $\omega^2\in \mathbb{C}$ and non vanishing
$({\boldsymbol w},{\boldsymbol p})\in (H_0^1(D))^2\times(H^1(D))^2$ such that
\begin{equation}\label{EigenWeakFormulation}
\left\{
\begin{array}{rcll}
(\sigma({\boldsymbol w}),\nabla {\boldsymbol \phi})+((\rho_0-\rho_1){\boldsymbol p},{\boldsymbol \phi})&=&\omega^2(\rho_0{\boldsymbol w},{\boldsymbol \phi}),&\forall {\boldsymbol \phi}\in(H^1(D))^2,\\
(\sigma({\boldsymbol p}),\nabla {\boldsymbol \varphi})&=&\omega^2(\rho_1{\boldsymbol p},{\boldsymbol \varphi}),&\forall {\boldsymbol \varphi}\in(H_0^1(D))^2.
\end{array}
\right.
\end{equation}

Denote the bilinear form $a({\boldsymbol w},{\boldsymbol \phi})=\int_D \sigma({\boldsymbol w})\colon\nabla{\boldsymbol \phi}\,dx$. It's easy to verify that
\begin{equation}\label{sigmaugv}
a({\boldsymbol w},{\boldsymbol \phi})=\int_D 2\mu\varepsilon(\boldsymbol{w})\colon\varepsilon(\boldsymbol{\phi})+\lambda(\nabla\cdot\boldsymbol{w})(\nabla\cdot\boldsymbol{\phi})\,{\rm d}{\boldsymbol x},\ \ \ \forall \ {\boldsymbol w}, {\boldsymbol \phi}\in(H^1(D))^2.
\end{equation}
By the first Korn inequality \cite[Corollary 11.2.25]{BrennerScott2002}, there exists a positive constant $C$ such that
\[
\|\varepsilon({\boldsymbol u})\|_{L^2} \ge C \|{\boldsymbol
u}\|_{H^1},\quad \forall {\boldsymbol u} \in (H_0^1(D))^2,
\]
which naturally guarantees the coercivity of \eqref{sigmaugv} for $\lambda>0$.
Especially, if $\lambda<0$, we can derive that
\begin{eqnarray}
a({\boldsymbol u},{\boldsymbol u})&=&\int_D 2\mu (\partial_{x}^2u_1+\partial_{y}^2u_2)+\mu(\partial_{y}u_1+\partial_{x}u_2)^2+\lambda(\partial_{x}u_1+\partial_{y}u_2)^2{\rm d}{\boldsymbol x}\\ \nonumber
&\geq&\int_D 2(\lambda+\mu)(\partial_{x}^2u_1+\partial_{y}^2u_2)+\mu(\partial_{y}u_1+\partial_{x}u_2)^2{\rm d}{\boldsymbol x} \\ \nonumber
&\geq& 2(\lambda+\mu)\int_D\varepsilon(\boldsymbol{u})\colon\varepsilon(\boldsymbol{u}) {\rm d}{\boldsymbol x}.
\end{eqnarray}
which also implies the coercivity of \eqref{sigmaugv}.

The following lemma shows that the variational problem \eqref{EigenWeakFormulation} is equivalent to the original one \eqref{tep}.
\begin{lemma}
If $(\omega^2,{\boldsymbol u},{\boldsymbol v})$ is a solution of \eqref{tep}, then $(\omega^2,{\boldsymbol {u-v}},-\omega^2{\boldsymbol v})$ is a solution of \eqref{EigenWeakFormulation}.
On the other hand, if $(\omega^2,{\boldsymbol w},{\boldsymbol p})$ is the solution of \eqref{EigenWeakFormulation}, then $(\omega^2,{\boldsymbol w}-{\boldsymbol p}/\omega^2,-{\boldsymbol p}/\omega^2)$ is the solution of \eqref{tep}.
\end{lemma}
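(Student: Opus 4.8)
The plan is to handle both directions by carefully tracking the change of variables $\boldsymbol{w}=\boldsymbol{u}-\boldsymbol{v}$, $\boldsymbol{p}=-\omega^2\boldsymbol{v}$ together with its inverse $\boldsymbol{v}=-\boldsymbol{p}/\omega^2$, $\boldsymbol{u}=\boldsymbol{w}-\boldsymbol{p}/\omega^2$, which is well defined precisely because $\omega^2\neq 0$. Both reformulations pass through the intermediate strong system \eqref{Problem}, so the main work is to connect \eqref{Problem} with the two endpoints \eqref{tep} and \eqref{EigenWeakFormulation}.

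For the forward direction I would start from a solution $(\omega^2,\boldsymbol{u},\boldsymbol{v})$ of \eqref{tep}. Subtracting the second governing equation from the first and regrouping $\rho_0\boldsymbol{u}-\rho_1\boldsymbol{v}=\rho_0\boldsymbol{w}+(\rho_0-\rho_1)\boldsymbol{v}$ yields the first line of \eqref{Problem}, while rewriting the second governing equation through $\boldsymbol{p}=-\omega^2\boldsymbol{v}$ yields the second line. The transmission conditions $\boldsymbol{u}=\boldsymbol{v}$ and $\sigma(\boldsymbol{u})\boldsymbol{\nu}=\sigma(\boldsymbol{v})\boldsymbol{\nu}$ translate directly into $\boldsymbol{w}=\boldsymbol{0}$ and $\sigma(\boldsymbol{w})\boldsymbol{\nu}=\boldsymbol{0}$ on $\partial D$, so $\boldsymbol{w}\in V$. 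Multiplying the two lines of \eqref{Problem} by test functions and integrating by parts then produces \eqref{EigenWeakFormulation}: the boundary term in the first equation vanishes because $\sigma(\boldsymbol{w})\boldsymbol{\nu}=\boldsymbol{0}$, which is why $\boldsymbol{\phi}$ is allowed to range over all of $(H^1(D))^2$, whereas the boundary term in the second equation vanishes because $\boldsymbol{\varphi}\in(H_0^1(D))^2$.

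The reverse direction is the one requiring the most care. Given a weak solution $(\omega^2,\boldsymbol{w},\boldsymbol{p})$, I would first test the first equation of \eqref{EigenWeakFormulation} against $\boldsymbol{\phi}\in(C_0^\infty(D))^2$ and integrate by parts to recover the first line of \eqref{Problem} in the strong sense; this step is legitimate because the regularity $\boldsymbol{w}\in(H^3(D))^2$ and $\boldsymbol{p}\in(H^1(D))^2$ was already established. Substituting this strong identity back into the weak equation tested against a general $\boldsymbol{\phi}\in(H^1(D))^2$ leaves only the boundary integral $\int_{\partial D}(\sigma(\boldsymbol{w})\boldsymbol{\nu})\cdot\boldsymbol{\phi}\,ds=0$; since traces of $(H^1(D))^2$ functions are dense in $(L^2(\partial D))^2$, this forces the natural boundary condition $\sigma(\boldsymbol{w})\boldsymbol{\nu}=\boldsymbol{0}$. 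The second line of \eqref{Problem} follows analogously from the second weak equation, where the test space is $(H_0^1(D))^2$ and no boundary condition is generated.

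Finally I would undo the change of variables. Writing $\boldsymbol{v}=-\boldsymbol{p}/\omega^2$ in the second line of \eqref{Problem} and dividing by the nonzero factor $-\omega^2$ recovers $\nabla\cdot\sigma(\boldsymbol{v})+\omega^2\rho_1\boldsymbol{v}=\boldsymbol{0}$; substituting this identity together with $\boldsymbol{u}=\boldsymbol{w}+\boldsymbol{v}$ into the first line collapses it to $\nabla\cdot\sigma(\boldsymbol{u})+\omega^2\rho_0\boldsymbol{u}=\boldsymbol{0}$. The boundary conditions of \eqref{tep} then follow: $\boldsymbol{w}\in(H_0^1(D))^2$ gives $\boldsymbol{u}=\boldsymbol{v}$ on $\partial D$, and the recovered natural condition $\sigma(\boldsymbol{w})\boldsymbol{\nu}=\boldsymbol{0}$ gives $\sigma(\boldsymbol{u})\boldsymbol{\nu}=\sigma(\boldsymbol{v})\boldsymbol{\nu}$. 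The only genuine subtlety in the whole argument is the extraction of this natural boundary condition from the variational identity posed over the full space $(H^1(D))^2$; everything else is an algebraic manipulation made reversible by the standing hypothesis $\omega^2\neq 0$.
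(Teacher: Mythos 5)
Your overall route---passing through the strong system \eqref{Problem} in both directions and extracting the natural boundary condition $\sigma(\boldsymbol{w})\boldsymbol{\nu}=\boldsymbol{0}$ from the variational identity posed over the full test space $(H^1(D))^2$---is the same as the paper's. But there is one genuine gap: you treat $\omega^2\neq 0$ as a ``standing hypothesis,'' and the entire converse direction (the inverse change of variables $\boldsymbol{v}=-\boldsymbol{p}/\omega^2$, $\boldsymbol{u}=\boldsymbol{w}-\boldsymbol{p}/\omega^2$) hinges on it. However, \eqref{EigenWeakFormulation} only asks for $\omega^2\in\mathbb{C}$ and a non-vanishing pair $(\boldsymbol{w},\boldsymbol{p})$; unlike \eqref{tep}, it does not build in $\omega^2\neq0$, so this must be \emph{proved}, not assumed. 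Indeed, the elimination of the non-physical kernel at $\omega=0$ is one of the advertised features of this mixed formulation, and the paper's proof devotes its first step to exactly this point.

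The missing argument is short but essential. Suppose $\omega^2=0$. Take $\boldsymbol{\phi}=\boldsymbol{p}$ in the first equation of \eqref{EigenWeakFormulation} and $\boldsymbol{\varphi}=\boldsymbol{w}$ in the second; since by \eqref{sigmaugv} the form is symmetric, $(\sigma(\boldsymbol{w}),\nabla\boldsymbol{p})=a(\boldsymbol{w},\boldsymbol{p})=a(\boldsymbol{p},\boldsymbol{w})=(\sigma(\boldsymbol{p}),\nabla\boldsymbol{w})=0$, so the first equation collapses to $((\rho_0-\rho_1)\boldsymbol{p},\boldsymbol{p})=0$, and $\rho_0-\rho_1\geq\rho_{min}>0$ forces $\boldsymbol{p}=\boldsymbol{0}$. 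Then taking $\boldsymbol{\phi}=\boldsymbol{w}$ and invoking the coercivity of $a(\cdot,\cdot)$ on $(H_0^1(D))^2$ gives $\boldsymbol{w}=\boldsymbol{0}$, contradicting the non-triviality of $(\boldsymbol{w},\boldsymbol{p})$. Hence $\omega^2\neq0$, and only now is the back-substitution legitimate. With this inserted, the rest of your proof goes through and coincides with the paper's, which additionally remarks that the convexity of $D$ is what upgrades $\boldsymbol{w}$ to membership in $V$.
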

\begin{proof}
The first part is straightforward. For the converse, let $(\omega^2,{\boldsymbol w},{\boldsymbol p})$ be a solution of \eqref{EigenWeakFormulation}. Then, we conclude $\omega^2\neq0$. Otherwise, by reduction to absurdity, taking ${\boldsymbol \phi}={\boldsymbol p}$ and ${\boldsymbol \varphi}={\boldsymbol w}$, it follows that ${\boldsymbol p}={\boldsymbol 0}$ and further ${\boldsymbol w}={\boldsymbol 0}$, here we use the coercivity of the bilinear form $a(\cdot,\cdot)$. In \eqref{EigenWeakFormulation}, using integration by parts, we can obtain that ${\boldsymbol w},{\boldsymbol p}$ satisfy the equations of system \eqref{Problem} and the boundary condition $\sigma({\boldsymbol w}){\boldsymbol \nu}={\boldsymbol 0}$.
Since $D$ is convex, we have ${\boldsymbol w}\in V$. Thus, $(\omega^2,{\boldsymbol w},{\boldsymbol p})$ is a solution to the system \eqref{Problem}.
By the equivalence of \eqref{tep} and \eqref{Problem}, it's easy to check that $(\omega^2,{\boldsymbol w}-{\boldsymbol p}/\omega^2,-{\boldsymbol p}/\omega^2)$ is also the solution of \eqref{tep}.
\end{proof}

The corresponding source problem of \eqref{EigenWeakFormulation} is stated as follows:
Given $({\boldsymbol f},{\boldsymbol g})\in (H_0^1(D))^2\times (L^2(D))^2$, find $( {\boldsymbol w}, {\boldsymbol p})\in (H_0^1(D))^2\times (H^1(D))^2$ satisfying
\begin{equation}\label{SourceWeakFormulation}
\left\{
\begin{array}{rcll}
(\sigma({\boldsymbol w}),\nabla {\boldsymbol \phi})+((\rho_0-\rho_1){\boldsymbol p},{\boldsymbol \phi})&=&(\rho_0{\boldsymbol f},{\boldsymbol \phi}),&\forall {\boldsymbol \phi}\in(H^1(D))^2,\\
(\sigma({\boldsymbol p}),\nabla {\boldsymbol \varphi})&=&(\rho_1{\boldsymbol g},{\boldsymbol \varphi}),&\forall {\boldsymbol \varphi}\in(H_0^1(D))^2.
\end{array}
\right.
\end{equation}

To solve the new non-self-adjoint eigenvalue system \eqref{SourceWeakFormulation}, we define the sesquilinear forms $\mathcal A, \mathcal B$ on $((H_0^1(D))^2\times (H^1(D))^2)
\times ((H_0^1(D))^2\times (H^1(D))^2)$ as follows

\begin{eqnarray}
&\mathcal{A}\big(({\boldsymbol w},{\boldsymbol p}),({\boldsymbol \varphi},{\boldsymbol \phi})\big)=(\sigma({\boldsymbol w}),\nabla {\boldsymbol \phi})+((\rho_0-\rho_1){\boldsymbol p},{\boldsymbol \phi})+(\sigma({\boldsymbol p}),\nabla {\boldsymbol \varphi}), \nonumber\\
&\mathcal{B}\big(({\boldsymbol w},{\boldsymbol p}),({\boldsymbol \varphi},{\boldsymbol \phi})\big)=(\rho_0{\boldsymbol w},{\boldsymbol \phi})+(\rho_1{\boldsymbol p},{\boldsymbol \varphi}),\nonumber
\end{eqnarray}
here $\boldsymbol w, \boldsymbol \varphi \in (H_0^1(D))^2,  \boldsymbol p,\boldsymbol \phi \in (H^1(D))^2$. Note that $\mathcal A$ is a inner product on $((H_0^1(D))^2\times (H^1(D))^2) \times ((H_0^1(D))^2\times (H^1(D))^2)$

Then the eigenvalue problem \eqref{SourceWeakFormulation} can be formulated as: Find $\omega\in \mathbb C$ and non-trivial $({\boldsymbol w},{\boldsymbol p})\in ((H_0^1(D))^2\times (H^1(D))^2)$ such that
\begin{equation}\label{VariationallyPosedEVP}
\frac{1}{\omega^2}\mathcal{A}\big(({\boldsymbol w},{\boldsymbol p}),({\boldsymbol \varphi},{\boldsymbol \phi})\big)=\mathcal{B}\big(({\boldsymbol w},{\boldsymbol p}),({\boldsymbol \varphi},{\boldsymbol \phi})\big), \, \forall ({\boldsymbol \varphi},{\boldsymbol \phi})\in (H_0^1(D))^2\times (H^1(D))^2,
\end{equation}
note that $\omega=0$ is not a transmission eigenvalue.

Using \eqref{SourceWeakFormulation}, we can define the solution operator $T\colon (H_0^1(D))^2\times (L^2(D))^2 \rightarrow (H_0^1(D))^2\times (L^2(D))^2$ by
\begin{eqnarray}\label{TTTTTT}
 \mathcal{A}\big(T({\boldsymbol w},{\boldsymbol p}),({\boldsymbol \varphi},{\boldsymbol \phi})\big)=\mathcal{B}\big(({\boldsymbol w},{\boldsymbol p}),({\boldsymbol \varphi},{\boldsymbol \phi})\big), \, \forall ({\boldsymbol \varphi},{\boldsymbol \phi})\in (H_0^1(D))^2\times (H^1(D))^2.
\end{eqnarray}
It's equivalent to the definition
 $({\boldsymbol f},{\boldsymbol g}) \mapsto T({\boldsymbol f},{\boldsymbol g})=({\boldsymbol w},{\boldsymbol p})$. Here $({\boldsymbol w},{\boldsymbol p})$ is the solution of \eqref{SourceWeakFormulation}.

Then we seek $\omega\in\mathbb C$ and non-trivial $({\boldsymbol w},{\boldsymbol p})\in (H_0^1(D))^2\times (H^1(D))^2$ such that
\begin{equation}
T({\boldsymbol w},{\boldsymbol p})=\frac{1}{\omega^2}({\boldsymbol w},{\boldsymbol p}).
\end{equation}
No spurious eigenvalues are introduced into the system since if $\omega\neq 0$, $({\boldsymbol 0},{\boldsymbol p})$ is not an eigenfunction of this
system. The above discussion gives a consistent one-to-one match
between the eigenvalue system \eqref{EigenWeakFormulation} and the compact operator $T$. We write it into the following theorem.
\begin{theorem} \label{Tcontinuous}
If $(\lambda,{\boldsymbol w},{\boldsymbol p})$ is an eigenpair of $T$ with $\lambda\neq0$, then $(\omega^2,{\boldsymbol w},{\boldsymbol p})$ is the solution of \eqref{EigenWeakFormulation} with $\lambda=\frac{1}{\omega^2}$, and vice versa.
\end{theorem}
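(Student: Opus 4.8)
The plan is to read the statement directly off the definition \eqref{TTTTTT} of $T$ together with the variational eigenvalue formulation \eqref{VariationallyPosedEVP}; the only genuine content is a regularity observation for the $\boldsymbol p$-component and the decoupling of the combined sesquilinear identity into the two scalar equations of \eqref{EigenWeakFormulation}.

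First I would settle regularity. By construction, for arbitrary data $T(\boldsymbol w,\boldsymbol p)$ is the solution of the source problem \eqref{SourceWeakFormulation}, hence lies in $(H_0^1(D))^2\times (H^1(D))^2$. If $(\lambda,\boldsymbol w,\boldsymbol p)$ is an eigentriple with $\lambda\neq 0$, then $(\boldsymbol w,\boldsymbol p)=\lambda^{-1}T(\boldsymbol w,\boldsymbol p)$, so its second component $\boldsymbol p$ automatically belongs to $(H^1(D))^2$ rather than merely $(L^2(D))^2$. This is exactly what is needed for $\mathcal{A}\big((\boldsymbol w,\boldsymbol p),(\boldsymbol\varphi,\boldsymbol\phi)\big)$, which involves $\sigma(\boldsymbol p)$, to be well defined, so that $(\boldsymbol w,\boldsymbol p)$ is an admissible pair for \eqref{VariationallyPosedEVP}.

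Next I would substitute the eigenrelation $T(\boldsymbol w,\boldsymbol p)=\lambda(\boldsymbol w,\boldsymbol p)$ into \eqref{TTTTTT}. Using that $\mathcal{A}$ is linear in its first argument, the left-hand side becomes $\lambda\,\mathcal{A}\big((\boldsymbol w,\boldsymbol p),(\boldsymbol\varphi,\boldsymbol\phi)\big)$, and dividing by $\lambda\neq 0$ gives
\[
\mathcal{A}\big((\boldsymbol w,\boldsymbol p),(\boldsymbol\varphi,\boldsymbol\phi)\big)=\frac{1}{\lambda}\,\mathcal{B}\big((\boldsymbol w,\boldsymbol p),(\boldsymbol\varphi,\boldsymbol\phi)\big)
\]
for all test pairs. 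Setting $\omega^2:=1/\lambda$ (legitimate since $\lambda\neq 0$), this is precisely \eqref{VariationallyPosedEVP}. To reach the two-equation form \eqref{EigenWeakFormulation}, I would then test separately: the choice $(\boldsymbol\varphi,\boldsymbol\phi)=(\boldsymbol 0,\boldsymbol\phi)$ isolates $(\sigma(\boldsymbol w),\nabla\boldsymbol\phi)+((\rho_0-\rho_1)\boldsymbol p,\boldsymbol\phi)=\omega^2(\rho_0\boldsymbol w,\boldsymbol\phi)$, and the choice $(\boldsymbol\varphi,\boldsymbol 0)$ isolates $(\sigma(\boldsymbol p),\nabla\boldsymbol\varphi)=\omega^2(\rho_1\boldsymbol p,\boldsymbol\varphi)$. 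Non-triviality of $(\boldsymbol w,\boldsymbol p)$ carries over verbatim, so $(\omega^2,\boldsymbol w,\boldsymbol p)$ is a genuine solution of \eqref{EigenWeakFormulation}.

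For the converse I would run the same computation backwards. Given a solution $(\omega^2,\boldsymbol w,\boldsymbol p)$ of \eqref{EigenWeakFormulation}, adding the two equations reassembles the combined identity, i.e. $\mathcal{B}\big((\boldsymbol w,\boldsymbol p),\cdot\big)=\frac{1}{\omega^2}\mathcal{A}\big((\boldsymbol w,\boldsymbol p),\cdot\big)=\mathcal{A}\big(\frac{1}{\omega^2}(\boldsymbol w,\boldsymbol p),\cdot\big)$, where $\omega^2\neq 0$ is guaranteed by the remark that $\omega=0$ is not a transmission eigenvalue. Comparing with the defining relation $\mathcal{A}\big(T(\boldsymbol w,\boldsymbol p),\cdot\big)=\mathcal{B}\big((\boldsymbol w,\boldsymbol p),\cdot\big)$ and invoking that $\mathcal{A}$ is an inner product (hence non-degenerate), I conclude $T(\boldsymbol w,\boldsymbol p)=\frac{1}{\omega^2}(\boldsymbol w,\boldsymbol p)$, so $(\boldsymbol w,\boldsymbol p)$ is an eigenpair of $T$ with $\lambda=1/\omega^2\neq 0$. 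I do not expect a serious obstacle: the argument is bookkeeping once the source problem \eqref{SourceWeakFormulation} is known to be uniquely solvable, so that $T$ is well defined, and once $\mathcal{A}$ is known to be non-degenerate. The one point demanding care — and the only place where $\lambda\neq 0$ is really used — is the regularity lift of the $\boldsymbol p$-component from $(L^2(D))^2$ to $(H^1(D))^2$, which is what makes the eigenfunction admissible in \eqref{VariationallyPosedEVP} and ensures that the two formulations describe the same object.
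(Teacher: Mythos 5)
Your proof is correct and takes essentially the same route the paper intends: the paper states this theorem as an immediate consequence of the preceding discussion (no written proof is given), and your argument just makes the bookkeeping explicit — substituting $T({\boldsymbol w},{\boldsymbol p})=\lambda({\boldsymbol w},{\boldsymbol p})$ into \eqref{TTTTTT}, decoupling by testing with $({\boldsymbol 0},{\boldsymbol \phi})$ and $({\boldsymbol \varphi},{\boldsymbol 0})$, and using the non-degeneracy of $\mathcal A$ (equivalently, the uniqueness in Lemma \ref{regulazing}) for the converse. Your observation that $\lambda\neq 0$ lifts ${\boldsymbol p}$ from $(L^2(D))^2$ to $(H^1(D))^2$, making $\mathcal A\big(({\boldsymbol w},{\boldsymbol p}),\cdot\big)$ well defined, is a worthwhile detail the paper glosses over.
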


The following Lemma shows the well-posedness of $T$.
\begin{lemma}\label{regulazing}
Given $({\boldsymbol f}, {\boldsymbol g})\in (H_0^1(D))^2\times (L^2(D))^2$, the boundary-value problem \eqref{SourceWeakFormulation} has a unique solution $({\boldsymbol w}, {\boldsymbol p})\in (H_0^1(D))^2\times (H^1(D))^2$. Typically we have
$\boldsymbol{w}\in (H^3(D))^2\cap V$ provided that $\rho_0({\boldsymbol x})-\rho_1({\boldsymbol x})$ is smooth enough.
\end{lemma}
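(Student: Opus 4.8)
The plan is to prove existence and uniqueness for the coupled system \eqref{SourceWeakFormulation} by decoupling it into two standard elasticity problems that can be solved in sequence, and then to bootstrap the regularity via elliptic estimates. First I would observe that the second equation of \eqref{SourceWeakFormulation} is completely decoupled from $\boldsymbol w$: it reads $a(\boldsymbol p,\boldsymbol\varphi)=(\rho_1\boldsymbol g,\boldsymbol\varphi)$ for all $\boldsymbol\varphi\in(H_0^1(D))^2$, which is exactly the weak form of a pure elasticity (Navier) problem with right-hand side $\rho_1\boldsymbol g\in(L^2(D))^2$ and homogeneous Dirichlet data. Since the bilinear form $a(\cdot,\cdot)$ is bounded on $(H^1(D))^2\times(H^1(D))^2$ and coercive on $(H_0^1(D))^2$ (this coercivity is established just before the statement, via the first Korn inequality for $\lambda>0$ and the algebraic estimate for $\lambda<0$), the Lax--Milgram theorem yields a unique $\boldsymbol p\in(H_0^1(D))^2\subset(H^1(D))^2$ with the bound $\|\boldsymbol p\|_{H^1}\le C\|\rho_1\boldsymbol g\|_{L^2}\le C\|\boldsymbol g\|_{L^2}$.

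With $\boldsymbol p$ now fixed, the first equation of \eqref{SourceWeakFormulation} becomes a second elasticity problem for $\boldsymbol w$: find $\boldsymbol w\in(H_0^1(D))^2$ with
\begin{equation*}
a(\boldsymbol w,\boldsymbol\phi)=(\rho_0\boldsymbol f,\boldsymbol\phi)-((\rho_0-\rho_1)\boldsymbol p,\boldsymbol\phi),\qquad\forall\,\boldsymbol\phi\in(H_0^1(D))^2.
\end{equation*}
The test space here is genuinely $(H_0^1(D))^2$ rather than all of $(H^1(D))^2$, because the boundary term arising from integration by parts drops for test functions vanishing on $\partial D$; this is the correct functional setting in which $a(\cdot,\cdot)$ is coercive. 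The right-hand side is a bounded antilinear functional on $(H_0^1(D))^2$ since $\rho_0\boldsymbol f\in(L^2(D))^2$ and $(\rho_0-\rho_1)\boldsymbol p\in(L^2(D))^2$ by \eqref{pP} and the $H^1$-regularity of $\boldsymbol p$. Lax--Milgram again gives a unique $\boldsymbol w\in(H_0^1(D))^2$ with $\|\boldsymbol w\|_{H^1}\le C(\|\boldsymbol f\|_{H^1}+\|\boldsymbol p\|_{H^1})\le C(\|\boldsymbol f\|_{H^1}+\|\boldsymbol g\|_{L^2})$. Reassembling, $(\boldsymbol w,\boldsymbol p)$ solves the full system, and uniqueness follows because the two subproblems are each uniquely solvable; the sequential construction shows no coupling can reintroduce nontriviality.

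For the regularity claim $\boldsymbol w\in(H^3(D))^2\cap V$, I would invoke the fourth-order reformulation \eqref{Fourth_regularity} together with elliptic regularity for the elasticity operator on the convex (hence $H^2$-regular) domain $D$. Concretely, $\boldsymbol p$ satisfies $\nabla\cdot\sigma(\boldsymbol p)=-\rho_1\boldsymbol g\in(L^2(D))^2$ with zero Dirichlet trace, so elliptic regularity on a convex domain lifts $\boldsymbol p$ to $(H^2(D))^2$ when $\boldsymbol g\in(L^2(D))^2$ is itself regular enough; more to the point, treating \eqref{Fourth_regularity} as a fourth-order problem with the homogeneous clamped-type boundary conditions $\boldsymbol w=\sigma(\boldsymbol w)\boldsymbol\nu=\boldsymbol 0$, the smoothness assumption on $\rho_0-\rho_1$ lets the shift theorem push the solution up by the expected number of derivatives, yielding $\boldsymbol w\in(H^3(D))^2$. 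The membership $\boldsymbol w\in V$ reduces to verifying the two boundary conditions in the definition of $V$: the Dirichlet condition is built into $(H_0^1(D))^2$, and $\sigma(\boldsymbol w)\boldsymbol\nu=\boldsymbol 0$ is recovered as a natural boundary condition from the weak form by integrating by parts with test functions not vanishing on $\partial D$. The main obstacle, and the step I would treat most carefully, is this elliptic regularity lift: one must confirm that the convexity of $D$ supplies the $H^2$ shift for the elasticity system and that the variable, merely-smooth coefficient $1/(\rho_0-\rho_1)$ in the principal part of \eqref{Fourth_regularity} does not obstruct the higher-order estimate. I would handle it by citing the regularity results for elasticity and for the Ciarlet--Raviart-type fourth-order system already referenced in the excerpt (\cite{Camano2018}, \cite{Girault1986}), which cover exactly this configuration, rather than reproving the shift theorem from scratch.
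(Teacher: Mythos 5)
Your decoupling of the system is not valid, and this is a genuine gap rather than a stylistic difference. In \eqref{SourceWeakFormulation} the unknown $\boldsymbol p$ lives in $(H^1(D))^2$, not $(H_0^1(D))^2$, while the second equation is tested only against $(H_0^1(D))^2$. That equation therefore says only $-\nabla\cdot\sigma(\boldsymbol p)=\rho_1\boldsymbol g$ in the distributional sense, with \emph{no} boundary condition on $\boldsymbol p$: its solution set is an infinite-dimensional affine space (add any $\boldsymbol q\in(H^1(D))^2$ with $\nabla\cdot\sigma(\boldsymbol q)=\boldsymbol 0$). By forcing $\boldsymbol p\in(H_0^1(D))^2$ you pick one particular representative, and it is generally the wrong one --- recall $\boldsymbol p=-\omega^2\boldsymbol v$ in the eigenvalue problem, and $\boldsymbol v$ has no reason to vanish on $\partial D$. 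The correct $\boldsymbol p$ is selected by the coupling with $\boldsymbol w$ through the first equation, so the system is not lower-triangular. Your second step compounds this: you shrink the test space of the first equation from $(H^1(D))^2$ to $(H_0^1(D))^2$ ``because the boundary term drops,'' but that discards exactly the natural boundary condition $\sigma(\boldsymbol w)\boldsymbol\nu=\boldsymbol 0$ encoded by testing against all of $(H^1(D))^2$. That condition is what forces $\boldsymbol w\in V\subset(H_0^2(D))^2$ and what makes the problem genuinely fourth order; the pair produced by your two Lax--Milgram solves would satisfy the first equation only for $\boldsymbol\phi\in(H_0^1(D))^2$ and would in general fail it for $\boldsymbol\phi\in(H^1(D))^2$. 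Consequently your uniqueness claim (``uniqueness follows because the two subproblems are each uniquely solvable'') also does not stand.

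For comparison, the paper's proof respects the coupling. Uniqueness: with $\boldsymbol f=\boldsymbol g=\boldsymbol 0$, take $\boldsymbol\phi=\boldsymbol p$ in the first equation and $\boldsymbol\varphi=\boldsymbol w$ in the second; the symmetry of $a(\cdot,\cdot)$ gives $(\sigma(\boldsymbol w),\nabla\boldsymbol p)=(\sigma(\boldsymbol p),\nabla\boldsymbol w)=0$, hence $((\rho_0-\rho_1)\boldsymbol p,\boldsymbol p)=0$ and $\boldsymbol p=\boldsymbol 0$ since $\rho_0-\rho_1\ge\rho_{\min}>0$, after which coercivity of $a$ on $(H_0^1(D))^2$ gives $\boldsymbol w=\boldsymbol 0$. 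Existence: one solves the fourth-order auxiliary problem \eqref{auxiliaryequation} for $\boldsymbol w\in V$ by Lax--Milgram, using the coercivity $\Vert\nabla\cdot\sigma(\boldsymbol\psi)\Vert_{0,D}\ge c\Vert\boldsymbol\psi\Vert_{2,D}$ on $V$, and then \emph{defines} $\boldsymbol p=(\rho_0\boldsymbol f+\nabla\cdot\sigma(\boldsymbol w))/(\rho_0-\rho_1)$, verifying that this pair satisfies both equations (including the $(H^1(D))^2$ test space in the first). The $H^3$ regularity then comes from the fourth-order equation, as in your last paragraph --- that part of your argument is in the right spirit, but it cannot be reached from your construction, since your $\boldsymbol w$ does not satisfy $\sigma(\boldsymbol w)\boldsymbol\nu=\boldsymbol 0$ and hence need not belong to $V$ at all.
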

\begin{proof}
First, we prove the uniqueness of the solution. Assume $({\boldsymbol f}, {\boldsymbol g})=({\boldsymbol 0}, {\boldsymbol 0})$, from \eqref{SourceWeakFormulation}, we have
\begin{equation}\label{ppppppppppp}
\left\{
\begin{array}{rcll}
(\sigma({\boldsymbol w}),\nabla {\boldsymbol \phi})+((\rho_0-\rho_1){\boldsymbol p},{\boldsymbol \phi})&=&0,&\forall {\boldsymbol \phi}\in(H^1(D))^2,\\
(\sigma({\boldsymbol p}),\nabla {\boldsymbol \varphi})&=&0,&\forall {\boldsymbol \varphi}\in(H_0^1(D))^2.
\end{array}
\right.
\end{equation}
Taking ${\boldsymbol \phi}={\boldsymbol p},\ {\boldsymbol \varphi}={\boldsymbol w}$ and combining \eqref{sigmaugv}, we can obtain
\begin{equation}
(\sigma({\boldsymbol w}),\nabla {\boldsymbol p})=(\sigma({\boldsymbol p}),\nabla {\boldsymbol w})=0.
\end{equation}
Using the first equation of \eqref{ppppppppppp}, it follows that ${\boldsymbol p}={\boldsymbol 0}$. Then by taking ${\boldsymbol \phi}={\boldsymbol w}$, we conclude that ${\boldsymbol w}={\boldsymbol 0}$. Hence, problem \eqref{SourceWeakFormulation} admits at most one solution.

Next, we prove the existence of the solution. For $({\boldsymbol f}, {\boldsymbol g})\in (H_0^1(D))^2\times (L^2(D))^2$, let ${\boldsymbol w}\in V$ satisfying
\begin{equation} \label{auxiliaryequation}
\int_{D}\left(\frac{\nabla\cdot\sigma({\boldsymbol w})}{\rho_0-\rho_1}\right)\cdot\left(\nabla\cdot\sigma({\boldsymbol \phi})\right)\,dx = -\int_{D}\rho_1{\boldsymbol g}\cdot{\boldsymbol \phi}\,dx
+\int_{D}\sigma\left(\frac{\rho_0\boldsymbol f}{\rho_0-\rho_1}\right)\colon\nabla{\boldsymbol \phi}\,dx,\\\\\ \forall {\boldsymbol \phi}\in V.
\end{equation}

Using the results in \cite{Ji2017, MarsdenHughes1994}, we know
\begin{equation}
\Vert \nabla\cdot\sigma({\boldsymbol \psi})\Vert_{0,D} \geq c\Vert{\boldsymbol \psi}\Vert_{2,D}, \ \ \ \forall {\boldsymbol \psi}\in V,  \nonumber
\end{equation}
which guarantees the coercivity of  equation \eqref{auxiliaryequation}.
As a consequence of Lax-Milgram theorem, there exists a unique ${\boldsymbol w}$ satisfying \eqref{auxiliaryequation}. Taking the integration by parts in \eqref{auxiliaryequation}, we can obtain ${\boldsymbol w}\in V$ satisfying
\begin{equation}\label{A}
-\nabla\cdot\sigma\left(\frac{\nabla\cdot\sigma({\boldsymbol w})}{\rho_0-\rho_1}\right)=\rho_1{\boldsymbol g}+\nabla\cdot\sigma\left(\frac{\rho_0{\boldsymbol f}}{\rho_0-\rho_1}\right)\in H^{-1}(D),
\end{equation}
which implies that $\boldsymbol{w}\in (H^3(D))^2\cap V$ for smooth $\rho_0({\boldsymbol x})-\rho_1({\boldsymbol x})$ \cite{Camano2018,Girault1986}.

Further, define ${\boldsymbol p}$ as
\begin{equation}\label{B}
{\boldsymbol p}=\frac{\rho_0{\boldsymbol f}+\nabla\cdot\sigma({\boldsymbol w})}{\rho_0-\rho_1}.
\end{equation}
Combining \eqref{A} and \eqref{B}, we have
\begin{equation}
-\nabla\cdot\sigma({\boldsymbol p})=-\nabla\cdot\sigma\left(\frac{\rho_0{\boldsymbol f}+\nabla\cdot\sigma({\boldsymbol w})}{\rho_0-\rho_1}\right)=\rho_1{\boldsymbol g},
\end{equation}
Thus, $({\boldsymbol w},{\boldsymbol p})$ is a solution of \eqref{SourceWeakFormulation}. The proof is complete.
 \end{proof}

\begin{lemma}
$T$ is a linear, bounded and compact operator.
\end{lemma}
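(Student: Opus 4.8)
The plan is to treat the three assertions in turn, obtaining compactness as a consequence of the regularity gain already recorded in Lemma~\ref{regulazing} together with Rellich's compact embedding theorem. Throughout I would write $T=(T_1,T_2)$, where $T_1(\boldsymbol f,\boldsymbol g)=\boldsymbol w$ and $T_2(\boldsymbol f,\boldsymbol g)=\boldsymbol p$ are the two components of the solution of the source problem \eqref{SourceWeakFormulation}, and abbreviate $X:=(H_0^1(D))^2\times (L^2(D))^2$.

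Linearity is immediate from the linearity of the forms in \eqref{SourceWeakFormulation} and of the right-hand sides: if $(\boldsymbol w_i,\boldsymbol p_i)=T(\boldsymbol f_i,\boldsymbol g_i)$ for $i=1,2$, then $(\alpha\boldsymbol w_1+\beta\boldsymbol w_2,\,\alpha\boldsymbol p_1+\beta\boldsymbol p_2)$ solves \eqref{SourceWeakFormulation} with data $(\alpha\boldsymbol f_1+\beta\boldsymbol f_2,\,\alpha\boldsymbol g_1+\beta\boldsymbol g_2)$, and the uniqueness part of Lemma~\ref{regulazing} forces this pair to coincide with $T(\alpha\boldsymbol f_1+\beta\boldsymbol f_2,\,\alpha\boldsymbol g_1+\beta\boldsymbol g_2)$.

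For boundedness I would extract a priori estimates from the constructive proof of Lemma~\ref{regulazing}. Applying the coercivity bound $\|\nabla\cdot\sigma(\boldsymbol\psi)\|_{0,D}\ge c\,\|\boldsymbol\psi\|_{2,D}$ to the auxiliary problem \eqref{auxiliaryequation}, whose right-hand side is controlled by $\|\boldsymbol g\|_{0,D}+\|\boldsymbol f\|_{1,D}$ (here $\rho_{min}>0$ makes $1/(\rho_0-\rho_1)$ bounded), yields $\|\boldsymbol w\|_{2,D}\le C(\|\boldsymbol f\|_{1,D}+\|\boldsymbol g\|_{0,D})$. Inserting this into the definition \eqref{B} and using $\rho_{min}>0$ once more gives $\|\boldsymbol p\|_{0,D}\le C(\|\boldsymbol f\|_{0,D}+\|\boldsymbol w\|_{2,D})\le C(\|\boldsymbol f\|_{1,D}+\|\boldsymbol g\|_{0,D})$. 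Hence $\|T(\boldsymbol f,\boldsymbol g)\|_{X}\le \|\boldsymbol w\|_{1,D}+\|\boldsymbol p\|_{0,D}\lesssim \|\boldsymbol f\|_{1,D}+\|\boldsymbol g\|_{0,D}$, which is precisely boundedness of $T$ on $X$.

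Compactness is the crux, and the strategy is to exploit that $T$ in fact maps $X$ boundedly into a strictly smaller, compactly embedded space. The estimates above, sharpened by the full conclusion of Lemma~\ref{regulazing}, show that $T_1$ is bounded from $X$ into $(H^2(D))^2$ (indeed into $(H^3(D))^2$), while $T_2$ is bounded from $X$ into $(H^1(D))^2$, the latter using $\boldsymbol w\in (H^3(D))^2$ in the algebraic relation \eqref{B}. Since $D$ is a bounded Lipschitz domain, Rellich's theorem furnishes the compact embeddings $(H^2(D))^2\hookrightarrow\hookrightarrow (H^1(D))^2$ and $(H^1(D))^2\hookrightarrow\hookrightarrow (L^2(D))^2$; composing the bounded map $T_1\colon X\to (H^2(D))^2$ with the compact injection into $(H_0^1(D))^2$ renders $T_1$ compact, and likewise $T_2\colon X\to (L^2(D))^2$ is compact, so that $T=(T_1,T_2)$ is compact on $X$. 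The one point demanding care is the $H^1$-regularity of $\boldsymbol p$: it is \emph{not} a free consequence of the $H^2$-bound on $\boldsymbol w$ but relies on the extra derivative provided by $\boldsymbol w\in (H^3(D))^2$ through \eqref{B}, which is where the smoothness hypothesis on $\rho_0-\rho_1$ enters. Should one wish to dispense with that hypothesis, the compactness of $T_2$ can instead be argued by a subsequence extraction, combining strong $H^1$-convergence of $\boldsymbol w_n$ (from the uniform $H^2$-bound) and strong $L^2$-convergence of $\boldsymbol f_n$ with the relation \eqref{B}.
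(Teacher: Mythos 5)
Your argument is correct and follows essentially the same route as the paper: the paper likewise disposes of linearity and boundedness as routine and obtains compactness from the inclusion $T\left((H_0^1(D))^2\times(L^2(D))^2\right)\subset\left[(H^3(D))^2\cap V\right]\times(H^1(D))^2$ composed with the compact embedding back into $(H_0^1(D))^2\times(L^2(D))^2$, which is exactly your Rellich argument, and your explicit a priori bounds merely fill in what the paper calls ``easy to verify.'' The one caveat concerns your closing aside: the proposed subsequence argument for $T_2$ without the smoothness hypothesis does not go through as stated, because $\boldsymbol p$ is built from $\nabla\cdot\sigma(\boldsymbol w)$ via \eqref{B} and hence its strong $L^2$ convergence requires strong convergence of \emph{second} derivatives of $\boldsymbol w_n$, which a uniform $(H^2(D))^2$ bound together with strong $(H^1(D))^2$ convergence does not supply --- but since the lemma (like the paper) operates under the smoothness assumption on $\rho_0-\rho_1$, this aside is not needed and the main proof stands.
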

\begin{proof}
It's easy to verify the linearity and boundedness of $T$. Here we only need to prove the compactness.
It's a consequence of the fact that
\begin{equation}
T\left((H_0^1(D))^2\times (L^2(D))^2\right)\subset\left[(H^3(D))^2\cap V\right]\times (H^1(D))^2\hookrightarrow(H_0^1(D))^2\times (L^2(D))^2.
\end{equation}
The second inclusion is compact.
\end{proof}

\section{Error estimates of the eigenpair approximation}\label{ErrorEstimate}
In this section, we consider the Galerkin finite element method for the elasticity transmission eigenvalue problem. First, some notations are introduced.
Let $\{\mathcal{T}_h\}_{h>0}$ be a family of shape regular meshes over $D$ with mesh size $h$, $V_h\subset (H^1(D))^2$ be $(P_1)^2$ Lagrange finite element space associated with
$\mathcal{T}_h$ and $V_h^0=V_h\cap(H_0^1(D))^2$. A lowest-order finite element method is studied here. We follow the approach from \cite{Camano2018,Ciarlet1974}.

The Galerkin approximation for problem \eqref{EigenWeakFormulation} is:
Find $\omega_h^2\in \mathbb{C}$ and
$({\boldsymbol w_h},{\boldsymbol p_h})\in V_h^0\times V_h$ such that
\begin{equation}\label{EigenWeakDiscreteFormulation}
\left\{
\begin{array}{rcll}
(\sigma({\boldsymbol w_h}),\nabla {\boldsymbol \phi_h})+((\rho_0-\rho_1){\boldsymbol p_h},{\boldsymbol \phi_h})&=&\omega_h^2(\rho_0{\boldsymbol w_h},{\boldsymbol \phi_h}),&\forall {\boldsymbol \phi_h}\in V_h,\\
(\sigma({\boldsymbol p_h}),\nabla {\boldsymbol \varphi_h})&=&\omega_h^2(\rho_1{\boldsymbol p_h},{\boldsymbol \varphi_h}),&\forall {\boldsymbol \varphi_h}\in V_h^0.
\end{array}
\right.
\end{equation}
The system \eqref{EigenWeakDiscreteFormulation}  is equivalent to the following formulation: Find $\omega_h^2\in \mathbb{C}$ and
$({\boldsymbol w_h},{\boldsymbol p_h})\in V_h^0\times V_h$ such that
 \begin{equation}\label{VariationallyPosedEVP}
\mathcal{A}\big(({\boldsymbol w_h},{\boldsymbol p_h}),({\boldsymbol \varphi_h},{\boldsymbol \phi_h})\big)=\omega_h^2\mathcal{B}\big(({\boldsymbol w_h},{\boldsymbol p_h}),({\boldsymbol \varphi_h},{\boldsymbol \phi_h})\big), \, \forall ({\boldsymbol \varphi_h},{\boldsymbol \phi_h})\in V_h^0\times V_h.
\end{equation}
If we want to employ the approximation theory of variationally posed eigenvalue problems \cite{BabuskaOsborn1991,Kolata1978,MercierOsbornRappazRaviart1981},
the following two conditions are required
\begin{equation}
\sup\limits_{({\boldsymbol w_h},{\boldsymbol p_h})\in V_h^0\times V_h} \frac{\mathcal{A}\big(({\boldsymbol w_h},{\boldsymbol p_h}),({\boldsymbol \varphi_h},{\boldsymbol \phi_h})\big)}{\Vert{\boldsymbol w_h}\Vert_{1,D}+\Vert{\boldsymbol p_h}\Vert_{1,D}}\geq \beta(h)\big(\Vert{\boldsymbol \varphi_h}\Vert_{1,D}+\Vert{\boldsymbol \phi_h}\Vert_{1,D} \big)>0
\end{equation}
and
\begin{equation}
\lim\limits_{h\rightarrow 0}\Big[\frac{1}{\beta(h)} \inf_{({\boldsymbol \varphi_h},{\boldsymbol \phi_h})\in V_h^0\times V_h}\big( \Vert{\boldsymbol w}-{\boldsymbol \varphi_h}\Vert_{1,D}+\Vert {\boldsymbol p}-{\boldsymbol \phi_h}\Vert_{1,D} \big) \Big]=0, \ \ \ \ \forall ({\boldsymbol w},{\boldsymbol p})\in (H_0^1(D))^2\times (H^1(D))^2,
\end{equation}
however, to the best of our knowledge, which are unsatisfied due to the non-self-adjointness. So we can not use the classical theoretical analysis  for mixed eigenvalue problems \eqref{EigenWeakFormulation} and \eqref{EigenWeakDiscreteFormulation}. Instead, we resort to the spectral approximation of compact operator \cite{BabuskaOsborn1991} and try to
prove the convergence of the approximate solution operator $T_h$ to $T$.

We introduce the approximate source problem: Given $({\boldsymbol f},{\boldsymbol g})\in (H_0^1(D))^2\times (L^2(D))^2$, find $({\boldsymbol w_h}, {\boldsymbol p_h})\in V_h^0\times V_h$ satisfying
\begin{equation}\label{SourceWeakDiscreteFormulation}
\left\{
\begin{array}{rcll}
(\sigma({\boldsymbol w_h}),\nabla {\boldsymbol \phi_h})+((\rho_0-\rho_1){\boldsymbol p_h},{\boldsymbol \phi_h})&=&(\rho_0{\boldsymbol f},{\boldsymbol \phi_h}),&\forall {\boldsymbol \phi_h}\in V_h,\\
(\sigma({\boldsymbol p_h}),\nabla {\boldsymbol \varphi_h})&=&(\rho_1{\boldsymbol g},{\boldsymbol \varphi_h}),&\forall {\boldsymbol \varphi_h}\in V_h^0.
\end{array}
\right.
\end{equation}
Then, the corresponding discrete solution operator is defined as follows
\begin{eqnarray}\label{Thhhhhh}
T_h\colon (H_0^1(D))^2\times (L^2(D))^2 &\rightarrow& (H_0^1(D))^2\times (L^2(D))^2,  \\ \nonumber
({\boldsymbol f},{\boldsymbol g}) &\mapsto& T_h({\boldsymbol f},{\boldsymbol g})=({\boldsymbol w_h},{\boldsymbol p_h}),
\end{eqnarray}
where $({\boldsymbol w_h},{\boldsymbol p_h})$ is the numerical solution of \eqref{SourceWeakDiscreteFormulation}.

\begin{lemma}
$T_h$ is well-defined. What's more, $T_h$ a bounded, bilinear, compact operator.
\end{lemma}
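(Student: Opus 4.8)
The plan is to treat the three claims—well-definedness (together with linearity), boundedness, and compactness—in turn, exploiting the crucial fact that \eqref{SourceWeakDiscreteFormulation} is a \emph{square} linear system on the finite-dimensional space $V_h^0\times V_h$: the trial unknowns $\boldsymbol w_h\in V_h^0$ and $\boldsymbol p_h\in V_h$ are tested, respectively, against $\boldsymbol \varphi_h\in V_h^0$ and $\boldsymbol \phi_h\in V_h$, so the number of equations equals the number of unknowns. For a square finite-dimensional system existence and uniqueness are equivalent, so it suffices to show that the only solution for $(\boldsymbol f,\boldsymbol g)=(\boldsymbol 0,\boldsymbol 0)$ is $(\boldsymbol w_h,\boldsymbol p_h)=(\boldsymbol 0,\boldsymbol 0)$; this is the discrete mirror of the uniqueness argument already carried out in Lemma \ref{regulazing}.

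For the homogeneous problem I would choose the cross test functions $\boldsymbol \phi_h=\boldsymbol p_h\in V_h$ and $\boldsymbol \varphi_h=\boldsymbol w_h\in V_h^0$, both admissible. The second equation gives $a(\boldsymbol p_h,\boldsymbol w_h)=0$, and since $a(\cdot,\cdot)$ is symmetric by \eqref{sigmaugv}, the first equation collapses to $((\rho_0-\rho_1)\boldsymbol p_h,\boldsymbol p_h)=0$. Because $\rho_0-\rho_1\ge \rho_{min}>0$, this forces $\boldsymbol p_h=\boldsymbol 0$. Testing the first equation again with $\boldsymbol \phi_h=\boldsymbol w_h\in V_h^0\subset V_h$ then yields $a(\boldsymbol w_h,\boldsymbol w_h)=0$, and the coercivity of $a$ established after \eqref{sigmaugv} (via Korn's inequality when $\lambda>0$, resp.\ the $\lambda<0$ estimate) gives $\boldsymbol w_h=\boldsymbol 0$. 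Hence $T_h$ is well defined. Linearity is then immediate: the right-hand sides $(\rho_0\boldsymbol f,\cdot)$ and $(\rho_1\boldsymbol g,\cdot)$ depend linearly on $(\boldsymbol f,\boldsymbol g)$, so superposition together with uniqueness shows $T_h\big(\alpha(\boldsymbol f_1,\boldsymbol g_1)+(\boldsymbol f_2,\boldsymbol g_2)\big)=\alpha T_h(\boldsymbol f_1,\boldsymbol g_1)+T_h(\boldsymbol f_2,\boldsymbol g_2)$.

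For boundedness I would derive an a priori estimate by the same cross-testing. Subtracting the second equation tested with $\boldsymbol \varphi_h=\boldsymbol w_h$ from the first tested with $\boldsymbol \phi_h=\boldsymbol p_h$, the symmetry of $a$ eliminates the elastic terms and leaves $((\rho_0-\rho_1)\boldsymbol p_h,\boldsymbol p_h)=(\rho_0\boldsymbol f,\boldsymbol p_h)-(\rho_1\boldsymbol g,\boldsymbol w_h)$, whence $\rho_{min}\Vert\boldsymbol p_h\Vert_{0,D}^2\le C\big(\Vert\boldsymbol f\Vert_{0,D}\Vert\boldsymbol p_h\Vert_{0,D}+\Vert\boldsymbol g\Vert_{0,D}\Vert\boldsymbol w_h\Vert_{0,D}\big)$. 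Independently, testing the first equation with $\boldsymbol \phi_h=\boldsymbol w_h$ and using coercivity gives $\Vert\boldsymbol w_h\Vert_{1,D}\le C\big(\Vert\boldsymbol f\Vert_{0,D}+\Vert\boldsymbol p_h\Vert_{0,D}\big)$. Inserting the latter into the former and absorbing the $\Vert\boldsymbol p_h\Vert_{0,D}$ contributions by Young's inequality yields $\Vert\boldsymbol p_h\Vert_{0,D}+\Vert\boldsymbol w_h\Vert_{1,D}\le C\big(\Vert\boldsymbol f\Vert_{0,D}+\Vert\boldsymbol g\Vert_{0,D}\big)$ with $C$ independent of $h$; since $\Vert\boldsymbol f\Vert_{0,D}\le\Vert\boldsymbol f\Vert_{1,D}$, this is precisely the boundedness of $T_h$ as a map into $(H_0^1(D))^2\times(L^2(D))^2$.

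Compactness is then trivial: the range of $T_h$ lies in the finite-dimensional space $V_h^0\times V_h$, so $T_h$ is a bounded finite-rank operator and is therefore compact. The only genuine subtlety—the part I would be most careful about—is the boundedness step, because the trial and test spaces are paired asymmetrically (the $\boldsymbol p_h$-equation is tested only against $V_h^0$), so $\boldsymbol p_h$ cannot be controlled directly by coercivity; the resolution is the bootstrap above, in which cross-testing and the symmetry of $a$ decouple the two stabilizing mechanisms ($\rho_{min}>0$ for $\boldsymbol p_h$, Korn coercivity for $\boldsymbol w_h$) and the two resulting estimates are combined. I would also note that the word ``bilinear'' in the statement should read ``linear''.
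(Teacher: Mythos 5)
Your proof is correct. The paper itself offers only the one-line remark that the proof ``follows the related results for $T$,'' so your write-up is essentially the intended argument made explicit: the uniqueness step (cross-testing with $\boldsymbol\phi_h=\boldsymbol p_h$, $\boldsymbol\varphi_h=\boldsymbol w_h$, symmetry of $a$, positivity of $\rho_0-\rho_1$, then Korn coercivity on $V_h^0$) is the verbatim discrete copy of Lemma~\ref{regulazing}, and replacing the compact embedding by the finite-rank observation is the standard discrete analogue. The one place where you genuinely \emph{cannot} ``follow the related results for $T$'' is existence: the continuous proof constructs the solution through the auxiliary fourth-order problem \eqref{auxiliaryequation}, which has no counterpart in the $P_1$ spaces. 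Your replacement --- observing that \eqref{SourceWeakDiscreteFormulation} is a square finite-dimensional linear system, so uniqueness implies existence --- is exactly the right fix, and it is the substantive content that the paper's one-liner glosses over. Your a priori bound with constants independent of $h$ is also correct and is in fact slightly more than the lemma asks for (it is what one would want for a uniform bound on $\Vert T_h\Vert$), and your remark that ``bilinear'' should read ``linear'' is right.
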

 \begin{proof}
The proof follows the related results for $T$.
\end{proof}

The next theorem gives a consistent one-to-one match
between the eigenvalue system \eqref{EigenWeakDiscreteFormulation} and the compact operator $T_h$.
\begin{theorem}
If $(\lambda_h,{\boldsymbol w_h},{\boldsymbol p_h})$ is an eigenpair of $T_h$ with $\lambda_h\neq0$, then $(\omega_h^2,{\boldsymbol w_h},{\boldsymbol p_h})$ is the solution of \eqref{EigenWeakDiscreteFormulation} with $\lambda_h=\frac{1}{\omega_h^2}$, and vice versa.
\end{theorem}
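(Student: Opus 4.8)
The plan is to establish the discrete one-to-one match by mirroring the argument that produced the continuous version, Theorem~\ref{Tcontinuous}, using the variational characterization of $T_h$ together with the already-noted equivalence between \eqref{EigenWeakDiscreteFormulation} and its form \eqref{VariationallyPosedEVP}. First I would record that, by its definition \eqref{Thhhhhh} together with the identification of the discrete source problem \eqref{SourceWeakDiscreteFormulation} with the forms $\mathcal A$ and $\mathcal B$, the operator $T_h$ is characterized by $\mathcal{A}\big(T_h({\boldsymbol f},{\boldsymbol g}),({\boldsymbol \varphi_h},{\boldsymbol \phi_h})\big)=\mathcal{B}\big(({\boldsymbol f},{\boldsymbol g}),({\boldsymbol \varphi_h},{\boldsymbol \phi_h})\big)$ for all $({\boldsymbol \varphi_h},{\boldsymbol \phi_h})\in V_h^0\times V_h$, the discrete analogue of \eqref{TTTTTT}.

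For the forward implication, suppose $T_h({\boldsymbol w_h},{\boldsymbol p_h})=\lambda_h({\boldsymbol w_h},{\boldsymbol p_h})$ with $\lambda_h\neq0$ and $({\boldsymbol w_h},{\boldsymbol p_h})\neq{\boldsymbol 0}$. Taking $({\boldsymbol f},{\boldsymbol g})=({\boldsymbol w_h},{\boldsymbol p_h})$ in the characterization above and using the linearity of $\mathcal A$ in its first argument gives $\lambda_h\,\mathcal{A}\big(({\boldsymbol w_h},{\boldsymbol p_h}),({\boldsymbol \varphi_h},{\boldsymbol \phi_h})\big)=\mathcal{B}\big(({\boldsymbol w_h},{\boldsymbol p_h}),({\boldsymbol \varphi_h},{\boldsymbol \phi_h})\big)$ for all test functions in $V_h^0\times V_h$. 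Dividing by $\lambda_h\neq0$ and setting $\omega_h^2=1/\lambda_h$ reproduces \eqref{VariationallyPosedEVP}, which is equivalent to \eqref{EigenWeakDiscreteFormulation}; since the eigenvector is nontrivial, $(\omega_h^2,{\boldsymbol w_h},{\boldsymbol p_h})$ is a genuine solution of the discrete eigenvalue system.

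For the converse, let $(\omega_h^2,{\boldsymbol w_h},{\boldsymbol p_h})$ solve \eqref{EigenWeakDiscreteFormulation} with $({\boldsymbol w_h},{\boldsymbol p_h})\neq{\boldsymbol 0}$. I would first rule out $\omega_h^2=0$ by the same contradiction argument as in Lemma~\ref{regulazing}: setting $\omega_h^2=0$ and choosing the legal test functions ${\boldsymbol \varphi_h}={\boldsymbol w_h}\in V_h^0$ in the second equation and ${\boldsymbol \phi_h}={\boldsymbol p_h}\in V_h$ in the first, the symmetry $(\sigma({\boldsymbol w_h}),\nabla{\boldsymbol p_h})=(\sigma({\boldsymbol p_h}),\nabla{\boldsymbol w_h})=0$ forces $((\rho_0-\rho_1){\boldsymbol p_h},{\boldsymbol p_h})=0$, whence ${\boldsymbol p_h}={\boldsymbol 0}$ because $\rho_0-\rho_1\ge\rho_{min}>0$, and then the coercivity of $a(\cdot,\cdot)$ forces ${\boldsymbol w_h}={\boldsymbol 0}$, a contradiction. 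With $\omega_h^2\neq0$ in hand, I rewrite \eqref{VariationallyPosedEVP} as $\mathcal{A}\big(\tfrac{1}{\omega_h^2}({\boldsymbol w_h},{\boldsymbol p_h}),({\boldsymbol \varphi_h},{\boldsymbol \phi_h})\big)=\mathcal{B}\big(({\boldsymbol w_h},{\boldsymbol p_h}),({\boldsymbol \varphi_h},{\boldsymbol \phi_h})\big)$, so that $\tfrac{1}{\omega_h^2}({\boldsymbol w_h},{\boldsymbol p_h})\in V_h^0\times V_h$ solves the discrete source problem \eqref{SourceWeakDiscreteFormulation} with data $({\boldsymbol w_h},{\boldsymbol p_h})$. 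By the unique solvability of that problem (the well-posedness of $T_h$), this solution must coincide with $T_h({\boldsymbol w_h},{\boldsymbol p_h})$, giving $T_h({\boldsymbol w_h},{\boldsymbol p_h})=\tfrac{1}{\omega_h^2}({\boldsymbol w_h},{\boldsymbol p_h})$ and hence $\lambda_h=1/\omega_h^2\neq0$.

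The only genuinely substantive step is the converse, and within it the exclusion of $\omega_h^2=0$: one must verify that the continuous nonexistence-of-zero-eigenvalue argument survives discretization, i.e. that the test-function choices ${\boldsymbol \varphi_h}={\boldsymbol w_h}$ and ${\boldsymbol \phi_h}={\boldsymbol p_h}$ are admissible in $V_h^0\times V_h$ and that both the density lower bound $\rho_{min}>0$ and the coercivity of $a(\cdot,\cdot)$ hold on the discrete subspaces. They do, since $V_h^0\subset(H_0^1(D))^2$ so that the first Korn inequality applies verbatim. Everything else is the formal manipulation of $\mathcal A$, $\mathcal B$ and $T_h$ together with the uniqueness part of the $T_h$ well-posedness lemma, and presents no obstruction.
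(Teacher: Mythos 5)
Your proposal is correct and follows the route the paper intends: the paper's own proof is the one-line remark that the result is the discrete version of Theorem \ref{Tcontinuous}, and your argument simply fills in the details of that same approach (the variational characterization of $T_h$, the equivalence with \eqref{VariationallyPosedEVP}, and the exclusion of $\omega_h^2=0$ via symmetry of $a(\cdot,\cdot)$, positivity of $\rho_0-\rho_1$, and coercivity on $V_h^0$), all of which transfer verbatim to the discrete spaces as you note.
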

 \begin{proof}
The result is a discrete version of  Theorem \ref{Tcontinuous}.
  \end{proof}

The idea of the section comes from the analysis of the stream function-vorticity-pressure formulation
of the Stokes problem. The following analysis is similar to Theorem III.2.6 and Lemma III.3.1 from \cite{Girault1986}.
First, we define the projection operator $P_h\colon (H^1(D))^2\rightarrow V_h$ satisfying
\begin{equation}
\left\{
\begin{array}{rlcl}
&a({\boldsymbol p}-P_h{\boldsymbol p},{\boldsymbol \theta_h}) &=& 0,~~~\forall  {\boldsymbol \theta_h}\in V_h, \\ \nonumber
&({\boldsymbol p}-P_h{\boldsymbol p},1) &=& {\boldsymbol 0}.
\end{array}
 \right.
\end{equation}
We also introduce the following two sets:
\begin{equation}
V({\boldsymbol f})=\{({\boldsymbol \psi},{\boldsymbol \varphi})\in (H_0^1(D))^2\times(L^2(D))^2\colon(\sigma({\boldsymbol \psi}),\nabla {\boldsymbol z})+((\rho_0-\rho_1){\boldsymbol \varphi},{\boldsymbol z})=(\rho_0{\boldsymbol f},{\boldsymbol z}),~\forall {\boldsymbol z}\in (H^1(D))^2\}, \ \nonumber
\end{equation}
and
\begin{equation}
V_h({\boldsymbol f})=\{({\boldsymbol \psi_h},{\boldsymbol \varphi_h})\in V_h^0\times V_h\colon(\sigma({\boldsymbol \psi_h}),\nabla {\boldsymbol z_h})+((\rho_0-\rho_1){\boldsymbol \varphi_h},{\boldsymbol z_h})=(\rho_0{\boldsymbol f},{\boldsymbol z_h}),~\forall {\boldsymbol z_h}\in V_h\}.  \ \nonumber
\end{equation}

First, we give the following auxiliary lemma.
\begin{lemma}\label{lemma3333}
Given $({\boldsymbol f},{\boldsymbol g})\in (H_0^1(D))^2\times (L^2(D))^2$, let $({\boldsymbol w}, {\boldsymbol p})\in (H_0^1(D))^2\times (H^1(D))^2$ be the solution of \eqref{SourceWeakFormulation} and $({\boldsymbol w_h}, {\boldsymbol p_h})\in V_h^0\times V_h$ be the numerical solution of \eqref{SourceWeakDiscreteFormulation}. Then, the following error estimate holds
\begin{equation}
\parallel {\boldsymbol w}-{\boldsymbol w_h}\parallel_{1,D}+\parallel {\boldsymbol p}-{\boldsymbol p_h}\parallel_{0,D}\leq \inf\limits_{({\boldsymbol \psi_h},{\boldsymbol \varphi_h})\in V_h ({\boldsymbol f})}(\parallel {\boldsymbol w}-{\boldsymbol \psi_h}\parallel_{1,D}+\parallel {\boldsymbol p}-{\boldsymbol \varphi_h}\parallel_{0,D})+
C\parallel {\boldsymbol p}-P_h{\boldsymbol p}\parallel_{0,D},
\end{equation}
where $C>0$ is a constant independent of $h, {\boldsymbol f}, {\boldsymbol g}$.
\end{lemma}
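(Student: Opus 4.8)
The plan is to prove a Céa-type (quasi-optimality) estimate by comparing the discrete solution not with the continuous one directly, but with an arbitrary member of the discrete constraint manifold $V_h(\boldsymbol f)$, and then controlling the purely discrete remainder. Fix $(\boldsymbol\psi_h,\boldsymbol\varphi_h)\in V_h(\boldsymbol f)$ and set $\boldsymbol\chi_h=\boldsymbol w_h-\boldsymbol\psi_h\in V_h^0$ and $\boldsymbol\eta_h=\boldsymbol p_h-\boldsymbol\varphi_h\in V_h$. Since $(\boldsymbol w_h,\boldsymbol p_h)$ and $(\boldsymbol\psi_h,\boldsymbol\varphi_h)$ both satisfy the first equation of \eqref{SourceWeakDiscreteFormulation} with the same right-hand side $(\rho_0\boldsymbol f,\cdot)$, their difference lies in the homogeneous discrete constraint set, i.e.
\begin{equation*}
a(\boldsymbol\chi_h,\boldsymbol z_h)+((\rho_0-\rho_1)\boldsymbol\eta_h,\boldsymbol z_h)=0,\qquad\forall\,\boldsymbol z_h\in V_h.\tag{$\star$}
\end{equation*}
By the triangle inequality it then suffices to bound $\Vert\boldsymbol\chi_h\Vert_{1,D}$ and $\Vert\boldsymbol\eta_h\Vert_{0,D}$ by the best-approximation quantities appearing on the right-hand side of the lemma.

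First I would bound the displacement remainder by the auxiliary-variable remainder. Choosing $\boldsymbol z_h=\boldsymbol\chi_h\in V_h^0$ in $(\star)$ and invoking the coercivity of $a(\cdot,\cdot)$ on $(H_0^1(D))^2$ guaranteed by Korn's inequality (cf.\ \eqref{sigmaugv}), with coercivity constant $\alpha$, together with $0<\rho_0-\rho_1\le\rho_{max}$, gives
\begin{equation*}
\alpha\Vert\boldsymbol\chi_h\Vert_{1,D}^2\le a(\boldsymbol\chi_h,\boldsymbol\chi_h)=-((\rho_0-\rho_1)\boldsymbol\eta_h,\boldsymbol\chi_h)\le\rho_{max}\Vert\boldsymbol\eta_h\Vert_{0,D}\Vert\boldsymbol\chi_h\Vert_{1,D},
\end{equation*}
so that $\Vert\boldsymbol\chi_h\Vert_{1,D}\le C\Vert\boldsymbol\eta_h\Vert_{0,D}$. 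Everything therefore reduces to an $L^2$ bound on $\boldsymbol\eta_h$.

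The hard part is bounding $\Vert\boldsymbol\eta_h\Vert_{0,D}$: the second equation of \eqref{SourceWeakDiscreteFormulation} is tested only against $V_h^0$, not all of $V_h$, so one cannot read off an $L^2$-estimate of the auxiliary variable directly, and a naive use of $(\star)$ returns only the useless bound $\Vert\boldsymbol\eta_h\Vert_{0,D}\le C\Vert\boldsymbol\eta_h\Vert_{1,D}$. The device, following Lemma III.3.1 of \cite{Girault1986}, is the elliptic projection $P_h$. Subtracting the second equations of \eqref{SourceWeakFormulation} and \eqref{SourceWeakDiscreteFormulation} (legitimate since $V_h^0\subset(H_0^1(D))^2$) yields the Galerkin orthogonality $a(\boldsymbol p-\boldsymbol p_h,\boldsymbol\varphi_h')=0$ for all $\boldsymbol\varphi_h'\in V_h^0$; combined with the defining property $a(\boldsymbol p-P_h\boldsymbol p,\boldsymbol\theta_h)=0$ for all $\boldsymbol\theta_h\in V_h$, this shows that $\boldsymbol\zeta_h:=\boldsymbol p_h-P_h\boldsymbol p\in V_h$ is $a$-orthogonal to the whole of $V_h^0$. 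Since $\boldsymbol\chi_h\in V_h^0$ and $a(\cdot,\cdot)$ is symmetric (see \eqref{sigmaugv}), choosing $\boldsymbol z_h=\boldsymbol\zeta_h$ in $(\star)$ makes the elastic term $a(\boldsymbol\chi_h,\boldsymbol\zeta_h)=a(\boldsymbol\zeta_h,\boldsymbol\chi_h)$ vanish, leaving the $L^2$-orthogonality $((\rho_0-\rho_1)\boldsymbol\eta_h,\boldsymbol\zeta_h)=0$.

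With this orthogonality in hand the remaining estimate is purely algebraic. Writing $\boldsymbol\eta_h=\boldsymbol\zeta_h+(P_h\boldsymbol p-\boldsymbol\varphi_h)$ and using the positivity hypothesis $\rho_0-\rho_1\ge\rho_{min}>0$, I would obtain from the $L^2$-orthogonality that
\begin{equation*}
\rho_{min}\Vert\boldsymbol\zeta_h\Vert_{0,D}^2\le((\rho_0-\rho_1)\boldsymbol\zeta_h,\boldsymbol\zeta_h)=-((\rho_0-\rho_1)(P_h\boldsymbol p-\boldsymbol\varphi_h),\boldsymbol\zeta_h)\le\rho_{max}\Vert P_h\boldsymbol p-\boldsymbol\varphi_h\Vert_{0,D}\Vert\boldsymbol\zeta_h\Vert_{0,D},
\end{equation*}
hence $\Vert\boldsymbol\zeta_h\Vert_{0,D}\le C\Vert P_h\boldsymbol p-\boldsymbol\varphi_h\Vert_{0,D}$. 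A triangle inequality through $P_h\boldsymbol p$ and $\boldsymbol p$ then yields $\Vert\boldsymbol\eta_h\Vert_{0,D}\le C(\Vert\boldsymbol p-\boldsymbol\varphi_h\Vert_{0,D}+\Vert\boldsymbol p-P_h\boldsymbol p\Vert_{0,D})$. Feeding this back into the bound for $\boldsymbol\chi_h$, and into the triangle inequalities $\Vert\boldsymbol w-\boldsymbol w_h\Vert_{1,D}\le\Vert\boldsymbol w-\boldsymbol\psi_h\Vert_{1,D}+\Vert\boldsymbol\chi_h\Vert_{1,D}$ and $\Vert\boldsymbol p-\boldsymbol p_h\Vert_{0,D}\le\Vert\boldsymbol p-\boldsymbol\varphi_h\Vert_{0,D}+\Vert\boldsymbol\eta_h\Vert_{0,D}$, and finally taking the infimum over $(\boldsymbol\psi_h,\boldsymbol\varphi_h)\in V_h(\boldsymbol f)$, produces the claimed estimate, with the generic constant tracked through $\alpha$ and the ratio $\rho_{max}/\rho_{min}$. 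The crux of the whole argument is precisely the $L^2$ control of the auxiliary unknown $\boldsymbol\eta_h$: the assumption $\rho_{min}>0$ supplies an $L^2$-coercive coupling term, while the elliptic projection $P_h$ is exactly the tool that annihilates the cross term arising from the mismatch between the test space $V_h^0$ of the second equation and the full space $V_h$.
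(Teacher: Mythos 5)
Your argument is correct and follows essentially the same route as the paper: triangle inequality through an arbitrary $({\boldsymbol \psi_h},{\boldsymbol \varphi_h})\in V_h({\boldsymbol f})$, coercivity of $a(\cdot,\cdot)$ to reduce the $H^1$-bound on ${\boldsymbol w_h}-{\boldsymbol \psi_h}$ to the $L^2$-bound on ${\boldsymbol p_h}-{\boldsymbol \varphi_h}$, and the key orthogonality $\big((\rho_0-\rho_1)({\boldsymbol p_h}-{\boldsymbol \varphi_h}),{\boldsymbol p_h}-P_h{\boldsymbol p}\big)=0$ obtained by combining the discrete constraint, the Galerkin orthogonality of the second equation, the definition of $P_h$, and the symmetry of $a$. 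The only cosmetic difference is that you apply the weighted $L^2$-coercivity to ${\boldsymbol p_h}-P_h{\boldsymbol p}$ and then add back $P_h{\boldsymbol p}-{\boldsymbol \varphi_h}$, whereas the paper applies it directly to ${\boldsymbol \varphi_h}-{\boldsymbol p_h}$; the two computations are equivalent.
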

\begin{proof}
For $\forall({\boldsymbol \psi_h},{\boldsymbol \varphi_h})\in V_h ({\boldsymbol f})$, using triangle inequality, we can obtain
\begin{equation}\label{000000}
\parallel {\boldsymbol w}-{\boldsymbol w_h}\parallel_{1,D}+\parallel {\boldsymbol p}-{\boldsymbol p_h}\parallel_{0,D}\leq \parallel {\boldsymbol w}-{\boldsymbol \psi_h}\parallel_{1,D}+\parallel{\boldsymbol \psi_h}-{\boldsymbol w_h}\parallel_{1,D}+\parallel {\boldsymbol p}-{\boldsymbol \varphi_h}\parallel_{0,D}+\parallel{\boldsymbol\varphi_h}-{\boldsymbol p_h}\parallel_{0,D}.
\end{equation}
Next, we bound the terms $\parallel{\boldsymbol \psi_h}-{\boldsymbol w_h}\parallel_{1,D}$ and $\parallel{\boldsymbol\varphi_h}-{\boldsymbol p_h}\parallel_{0,D}$,  respectively.
For the term $\parallel{\boldsymbol\varphi_h}-{\boldsymbol p_h}\parallel_{0,D}$, we have
\begin{eqnarray} \label{aaaaaa}\nonumber
\rho_{min}\Vert{\boldsymbol \varphi_h}-{\boldsymbol p_h}\Vert^2_{0,\Omega}&\leq&
\big\vert\big((\rho_0-\rho_1)({\boldsymbol \varphi_h}-{\boldsymbol p_h}),({\boldsymbol \varphi_h}-{\boldsymbol p_h})\big)\big\vert \\ 
&=& \big\vert\big((\rho_0-\rho_1)({\boldsymbol \varphi_h}-{\boldsymbol p_h}),({\boldsymbol \varphi_h}-P_h{\boldsymbol p})\big) + \big((\rho_0-\rho_1)({\boldsymbol \varphi_h}-{\boldsymbol p_h}),(P_h{\boldsymbol p}-{\boldsymbol p_h})\big)\big \vert.
\end{eqnarray}
Since $({\boldsymbol w_h}, {\boldsymbol p_h}), ({\boldsymbol \psi_h}, {\boldsymbol \varphi_h})\in V_h({\boldsymbol f})$, we can obtain
\begin{equation}\label{bbbbbb}
\big(\sigma({\boldsymbol w_h}-{\boldsymbol \psi_h}),\nabla{\boldsymbol z_h}\big) +  \big((\rho_0-\rho_1)({\boldsymbol p_h}-{\boldsymbol \varphi_h}),{\boldsymbol z_h}\big)=0,~~~~\forall {\boldsymbol z_h}\in V_h.
\end{equation}
In particular, taking ${\boldsymbol z_h}=P_h{\boldsymbol p}-{\boldsymbol p_h}$, we have
\begin{equation}\label{pppp}
\big((\rho_0-\rho_1)({\boldsymbol p_h}-{\boldsymbol \varphi_h}),P_h{\boldsymbol p}-{\boldsymbol p_h}\big)=-\big(\sigma({\boldsymbol w_h}-{\boldsymbol \psi_h}),\nabla(P_h{\boldsymbol p}-{\boldsymbol p_h})\big).
\end{equation}
On the other hand, the combination of \eqref{SourceWeakFormulation} and \eqref{SourceWeakDiscreteFormulation} leads to
\begin{equation}
\big(\sigma({\boldsymbol p}),\nabla{\boldsymbol \theta_h} \big)=\big(\sigma({\boldsymbol p_h}),\nabla{\boldsymbol \theta_h} \big),~~~~\forall  {\boldsymbol \theta_h}\in V_h^0.
\end{equation}
Further, the following result holds
\begin{equation}
\big(\sigma({\boldsymbol p_h}-P_h{\boldsymbol p}),\nabla{\boldsymbol \theta_h} \big)=\big(\sigma({\boldsymbol p}-P_h{\boldsymbol p}),\nabla{\boldsymbol \theta_h} \big)=0,~~~~\forall  {\boldsymbol \theta_h}\in V_h^0.
\end{equation}
Especially, taking ${\boldsymbol \theta_h}={\boldsymbol \psi_h}-{\boldsymbol w_h}$ and combining \eqref{pppp}, we have
\begin{equation}
\big((\rho_0-\rho_1)({\boldsymbol p_h}-{\boldsymbol \varphi_h}),P_h{\boldsymbol p}-{\boldsymbol p_h}\big)=0.
\end{equation}
Using the above  equation \eqref{aaaaaa} and the Cauchy-Schwarz inequality, we obtain
\begin{eqnarray} \nonumber
\rho_{min}\Vert{\boldsymbol \varphi_h}-{\boldsymbol p_h}\Vert^2_{0,D}&\leq&
\big\vert\big((\rho_0-\rho_1)({\boldsymbol \varphi_h}-{\boldsymbol p_h}),({\boldsymbol \varphi_h}-{\boldsymbol p_h})\big)\big\vert \\ \nonumber
&=& \big\vert\big((\rho_0-\rho_1)({\boldsymbol \varphi_h}-{\boldsymbol p_h}),({\boldsymbol \varphi_h}-P_h{\boldsymbol p})\big)\big \vert \\  
&\leq& \rho_{max}\Vert{\boldsymbol \varphi_h}-{\boldsymbol p_h}\Vert_{0,D} \Vert {\boldsymbol \varphi_h}-P_h{\boldsymbol p}\Vert_{0,D}.
\end{eqnarray}
Therefore, we have
\begin{equation}\label{111111}
\Vert{\boldsymbol \varphi_h}-{\boldsymbol p_h}\Vert_{0,D}\leq C \Vert {\boldsymbol \varphi_h}-P_h{\boldsymbol p}\Vert_{0,D}\leq C\Vert {\boldsymbol \varphi_h}-{\boldsymbol p}\Vert_{0,D}
+C\Vert {\boldsymbol p}-P_h{\boldsymbol p}\Vert_{0,D}.
\end{equation}
In the following, we consider the bound of the term $\parallel{\boldsymbol \psi_h}-{\boldsymbol w_h}\parallel_{1,D}$. In \eqref{bbbbbb}, taking ${\boldsymbol z_h}={\boldsymbol w_h}-{\boldsymbol \psi_h}$ and applying
the coercivity of \eqref{sigmaugv}, we obtain
\begin{equation}\label{cccccccc}
C_1\Vert{\boldsymbol w_h}-{\boldsymbol \psi_h}\Vert_{1,D}^2
\leq \rho_{max}\Vert {\boldsymbol p_h}-{\boldsymbol \varphi_h}\Vert_{0,D}
\Vert {\boldsymbol w_h}-{\boldsymbol \psi_h}\Vert_{0,D},
\end{equation}
where $C_1$ is a positive constant.
Futher, the following result holds
\begin{equation}
\Vert{\boldsymbol w_h}-{\boldsymbol \psi_h}\Vert_{1,D}^2\leq\frac{\rho_{max}}{C_1}\Vert {\boldsymbol p_h}-{\boldsymbol \varphi_h}\Vert_{0,D}
\Vert {\boldsymbol w_h}-{\boldsymbol \psi_h}\Vert_{1,D}.
\end{equation}
Then, we obtain
\begin{equation}\label{222222}
\Vert{\boldsymbol w_h}-{\boldsymbol \psi_h}\Vert_{1,D}\leq\frac{\rho_{max}}{C_1}\Vert {\boldsymbol p_h}-{\boldsymbol \varphi_h}\Vert_{0,D}.
\end{equation}
The combinations of \eqref{000000}, \eqref{111111} and \eqref{222222} complete the proof.
\end{proof}
Next, we give the estimates on each of the two terms on the right hand side of \eqref{000000}.
\begin{lemma}\label{lemma4444}
For $\forall({\boldsymbol \psi},{\boldsymbol \varphi})\in V({\boldsymbol f})$, there exists a constant $C>0$ such that
\begin{eqnarray}\label{ffffff}
\inf\limits_{({\boldsymbol \psi_h},{\boldsymbol \varphi_h})\in V_h({\boldsymbol f})}\big(\Vert{\boldsymbol \psi}-{\boldsymbol \psi_h}\Vert_{1,D}+\Vert{\boldsymbol \varphi}-{\boldsymbol \varphi_h}\Vert_{0,D}\big)
&\leq &C\inf\limits_{({\boldsymbol \theta_h},{\boldsymbol z_h})\in V_h^0\times V_h}
\Big[\big(\Vert{\boldsymbol \psi}-{\boldsymbol \theta_h}\Vert_{1,D}+\Vert{\boldsymbol \varphi}-{\boldsymbol z_h}\Vert_{0,D}\big) \nonumber\\
&+&\sup\limits_{{\boldsymbol y_h}\neq0\in V_h}\frac{\vert a({\boldsymbol \psi}-{\boldsymbol \theta_h},{\boldsymbol y_h})\vert}{\Vert {\boldsymbol y_h}\Vert_{0,D}}\Big].  
\end{eqnarray}
\end{lemma}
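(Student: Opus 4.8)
The plan is to prove a one-sided approximation estimate of Fortin type: for an arbitrary \emph{unconstrained} competitor $(\boldsymbol\theta_h,\boldsymbol z_h)\in V_h^0\times V_h$ I will manufacture a nearby \emph{constrained} pair $(\boldsymbol\psi_h,\boldsymbol\varphi_h)\in V_h(\boldsymbol f)$, estimate its distance to $(\boldsymbol\psi,\boldsymbol\varphi)$, and then take the infimum over $(\boldsymbol\theta_h,\boldsymbol z_h)$ at the very end. The structural observation that makes this work is that the discrete constraint defining $V_h(\boldsymbol f)$ pairs the multiplier $\boldsymbol\varphi_h$ with the test functions $\boldsymbol z_h$ through the form $((\rho_0-\rho_1)\cdot,\cdot)$, in which \emph{both} arguments range over the same space $V_h$. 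Since $\rho_0-\rho_1\geq\rho_{min}>0$ pointwise, this form is symmetric and coercive on $V_h\times V_h$, so the discrete inf-sup (Babu\v{s}ka--Brezzi) requirement is automatic and I may correct the multiplier component alone while leaving the $\boldsymbol\psi$ component unchanged.

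Concretely, I would set $\boldsymbol\psi_h:=\boldsymbol\theta_h$ and define $\boldsymbol\varphi_h\in V_h$ as the unique solution (by finite-dimensional coercivity / Lax--Milgram) of
\[
((\rho_0-\rho_1)\boldsymbol\varphi_h,\boldsymbol y_h)=(\rho_0\boldsymbol f,\boldsymbol y_h)-(\sigma(\boldsymbol\theta_h),\nabla\boldsymbol y_h),\qquad\forall\,\boldsymbol y_h\in V_h.
\]
By construction $(\boldsymbol\theta_h,\boldsymbol\varphi_h)$ satisfies the defining identity of $V_h(\boldsymbol f)$, hence is an admissible competitor in the infimum on the left-hand side of \eqref{ffffff}. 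Because $\boldsymbol\psi_h=\boldsymbol\theta_h$, the first component of the error is simply $\Vert\boldsymbol\psi-\boldsymbol\psi_h\Vert_{1,D}=\Vert\boldsymbol\psi-\boldsymbol\theta_h\Vert_{1,D}$, which already matches a term on the right.

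The heart of the argument is bounding $\Vert\boldsymbol\varphi-\boldsymbol\varphi_h\Vert_{0,D}$. I would subtract $((\rho_0-\rho_1)\boldsymbol z_h,\boldsymbol y_h)$ from the defining relation and, crucially, use that $(\boldsymbol\psi,\boldsymbol\varphi)\in V(\boldsymbol f)$ to replace $(\rho_0\boldsymbol f,\boldsymbol y_h)$ by $(\sigma(\boldsymbol\psi),\nabla\boldsymbol y_h)+((\rho_0-\rho_1)\boldsymbol\varphi,\boldsymbol y_h)$, which is legitimate since $\boldsymbol y_h\in V_h\subset(H^1(D))^2$ is an admissible test function in the continuous constraint. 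This yields, for all $\boldsymbol y_h\in V_h$,
\[
((\rho_0-\rho_1)(\boldsymbol\varphi_h-\boldsymbol z_h),\boldsymbol y_h)=a(\boldsymbol\psi-\boldsymbol\theta_h,\boldsymbol y_h)+((\rho_0-\rho_1)(\boldsymbol\varphi-\boldsymbol z_h),\boldsymbol y_h).
\]
Testing with $\boldsymbol y_h=\boldsymbol\varphi_h-\boldsymbol z_h\in V_h$, using coercivity on the left and Cauchy--Schwarz on the right, and dividing by $\Vert\boldsymbol\varphi_h-\boldsymbol z_h\Vert_{0,D}$, I obtain
\[
\rho_{min}\Vert\boldsymbol\varphi_h-\boldsymbol z_h\Vert_{0,D}\leq\sup_{\boldsymbol y_h\neq0\in V_h}\frac{\vert a(\boldsymbol\psi-\boldsymbol\theta_h,\boldsymbol y_h)\vert}{\Vert\boldsymbol y_h\Vert_{0,D}}+\rho_{max}\Vert\boldsymbol\varphi-\boldsymbol z_h\Vert_{0,D}.
\]
A triangle inequality $\Vert\boldsymbol\varphi-\boldsymbol\varphi_h\Vert_{0,D}\leq\Vert\boldsymbol\varphi-\boldsymbol z_h\Vert_{0,D}+\Vert\boldsymbol z_h-\boldsymbol\varphi_h\Vert_{0,D}$, combined with the trivial $\boldsymbol\psi$ estimate, bounds the error of the fixed competitor by a constant (depending only on $\rho_{min},\rho_{max}$) times the three quantities on the right of \eqref{ffffff}; taking the infimum over $(\boldsymbol\theta_h,\boldsymbol z_h)\in V_h^0\times V_h$ then completes the proof.

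The one step requiring discipline is expressing the $\sigma$-contribution as the dual (negative-norm) quantity $\sup_{\boldsymbol y_h\neq0}\vert a(\boldsymbol\psi-\boldsymbol\theta_h,\boldsymbol y_h)\vert/\Vert\boldsymbol y_h\Vert_{0,D}$ rather than crudely estimating it by $\Vert\boldsymbol\psi-\boldsymbol\theta_h\Vert_{1,D}$: this weaker form is exactly what keeps open the possibility of a higher convergence order and matches the stated right-hand side, so I must test precisely with $\boldsymbol y_h=\boldsymbol\varphi_h-\boldsymbol z_h$ and factor out its $0$-norm cleanly. Everything else---existence and uniqueness of $\boldsymbol\varphi_h$, its admissibility in $V_h(\boldsymbol f)$, and the constant bookkeeping---is routine and does not present a genuine obstacle.
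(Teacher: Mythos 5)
Your proposal is correct and follows essentially the same route as the paper: the paper also fixes $\boldsymbol\psi_h=\boldsymbol\theta_h$ and corrects only the multiplier by defining $\boldsymbol\gamma_h\in V_h$ through $((\rho_0-\rho_1)\boldsymbol\gamma_h,\boldsymbol y_h)=a(\boldsymbol\psi-\boldsymbol\theta_h,\boldsymbol y_h)+((\rho_0-\rho_1)(\boldsymbol\varphi-\boldsymbol z_h),\boldsymbol y_h)$ and setting $\boldsymbol\varphi_h=\boldsymbol z_h+\boldsymbol\gamma_h$, which is exactly your $\boldsymbol\varphi_h$. The only cosmetic difference is that you bound $\Vert\boldsymbol\varphi_h-\boldsymbol z_h\Vert_{0,D}$ by testing with $\boldsymbol y_h=\boldsymbol\varphi_h-\boldsymbol z_h$ and invoking coercivity of the weighted $L^2$ form, whereas the paper writes the same estimate as a duality supremum; your version is, if anything, slightly cleaner.
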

\begin{proof}
For $\forall$ $({\boldsymbol \psi},{\boldsymbol \varphi})\in V({\boldsymbol f})$, we have
\begin{equation}
(\sigma({\boldsymbol \psi}),\nabla {\boldsymbol y_h})+\big((\rho_0-\rho_1){\boldsymbol \varphi},{\boldsymbol y_h}\big)
=(\rho_0{\boldsymbol f},{\boldsymbol y_h}),~~~~\forall {\boldsymbol y_h}\in V_h. \ \nonumber
\end{equation}
For each $({\boldsymbol \theta_h},{\boldsymbol z_h})\in V_h^0\times V_h$,
define ${\boldsymbol \gamma_h}\in V_h$ by
\begin{equation}\label{dddddd}
 \big((\rho_0-\rho_1){\boldsymbol \gamma_h},{\boldsymbol y_h}\big)=\big(\sigma({\boldsymbol \psi}-{\boldsymbol \theta_h}),\nabla{\boldsymbol y_h}\big) +  \big((\rho_0-\rho_1)({\boldsymbol \varphi}-{\boldsymbol z_h}),{\boldsymbol y_h}\big),~~~~\forall {\boldsymbol y_h}\in V_h.
\end{equation}
Then, using \eqref{dddddd} and the triangle inequality, we obtain
\begin{eqnarray}\label{eeeeee}
\Vert{\boldsymbol \gamma_h}\Vert_{0,D}
&=&\sup\limits_{\Vert{\boldsymbol y_h}\Vert_{0,D}\neq0}\frac{\vert({\boldsymbol \gamma_h},{\boldsymbol y_h})\vert}{\Vert{\boldsymbol y_h}\Vert_{0,D}}
\leq \sup\limits_{\Vert{\boldsymbol y_h}\Vert_{0,D}\neq0}\frac{\vert\big((\rho_0-\rho_1){\boldsymbol \gamma_h},{\boldsymbol y_h}\big)\vert}{\rho_{min}\Vert{\boldsymbol y_h}\Vert_{0,D}} \nonumber\\
&\leq& \frac{\rho_{max}}{\rho_{min}}\Vert{\boldsymbol \varphi}-{\boldsymbol z_h}\Vert_{0,D}
+\sup\limits_{\Vert{\boldsymbol y_h}\Vert_{0,D}\neq0}\frac{\big\vert\big(\sigma({\boldsymbol \psi}-{\boldsymbol \theta_h}),\nabla{\boldsymbol y_h}\big) \big\vert}{\rho_{min}\Vert {\boldsymbol y_h}\Vert_{0,D}}.
\end{eqnarray}
From \eqref{dddddd}, it's easy to verify that
\begin{equation}
(\sigma({\boldsymbol \theta_h}),\nabla {\boldsymbol y_h})+((\rho_0-\rho_1)({\boldsymbol {z_h+\gamma_h}}),{\boldsymbol y_h})=(\rho_0{\boldsymbol f},{\boldsymbol y_h}),~\forall {\boldsymbol y_h}\in V_h.\nonumber
\end{equation}
Hence, if we define $({\boldsymbol \psi_h},{\boldsymbol \varphi_h}):=({\boldsymbol \theta_h},{\boldsymbol {z_h+\gamma_h}})$, then $({\boldsymbol \psi_h},{\boldsymbol \varphi_h})\in V_h({\boldsymbol f})$.
Using the triangle inequality, we have the following result
\begin{equation}
\parallel {\boldsymbol \psi}-{\boldsymbol \psi_h}\parallel_{1,D}+\parallel {\boldsymbol \varphi}-{\boldsymbol \varphi_h}\parallel_{0,D}\leq \parallel {\boldsymbol \psi}-{\boldsymbol \theta_h}\parallel_{1,D}+\parallel{\boldsymbol \varphi}-{\boldsymbol z_h}\parallel_{0,D}+\parallel{\boldsymbol\gamma_h}\parallel_{0,D}. \nonumber
\end{equation}
Combing the above equation and \eqref{eeeeee},  we get the proof.
\end{proof}
In the following, we estimate the last term on the right hand side of \eqref{ffffff}. We introduce the projection operator $P_{0,h}\colon(H_0^1(D))^2\rightarrow V^0_h$ defined by
\begin{equation}
a({\boldsymbol w}-P_{0,h}{\boldsymbol w},{\boldsymbol z_h}) = 0,~~~\forall  {\boldsymbol z_h}\in V^0_h \\ \nonumber
\end{equation}
and have the following result.
\begin{lemma}\label{lemma5555}
For $\forall {\boldsymbol \psi}\in (H^3(D))^2\cap(H_0^1(D))^2$ and each $\epsilon\in(0,\frac{1}{2})$, there exists  a constant $C(\epsilon)>0$, such that
\begin{equation}
\sup\limits_{{\boldsymbol y_h}\neq0\in V_h}\frac{\vert a({\boldsymbol \psi}-P_{0,h}{\boldsymbol \psi},{\boldsymbol y_h})\vert}{\Vert {\boldsymbol y_h}\Vert_{0,D}}\leq C(\epsilon) h^{\frac{1}{2}-\epsilon}\Vert {\boldsymbol \psi}\Vert_{3,D}.
\end{equation}
\end{lemma}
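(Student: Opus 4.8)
The plan is to read the left-hand side as a \emph{boundary consistency term} for the elliptic projection $P_{0,h}$ and to control it by localization to a thin layer at $\partial D$. Set $\boldsymbol\eta:=\boldsymbol\psi-P_{0,h}\boldsymbol\psi$. By the very definition of $P_{0,h}$ we have the Galerkin orthogonality $a(\boldsymbol\eta,{\boldsymbol z_h})=0$ for every ${\boldsymbol z_h}\in V_h^0$. The essential obstruction is that the supremum runs over the full space $V_h$, not merely over $V_h^0$: the boundary degrees of freedom of ${\boldsymbol y_h}$ are \emph{not} annihilated by this orthogonality, and it is exactly their contribution, living in a neighbourhood of $\partial D$, that must be estimated.

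First I would localize. For fixed ${\boldsymbol y_h}\in V_h$, let $\tilde{\boldsymbol y}_h\in V_h^0$ be obtained by zeroing the nodal values of ${\boldsymbol y_h}$ at all boundary vertices, so that ${\boldsymbol y_h}-\tilde{\boldsymbol y}_h$ is supported on the boundary strip $S_h:=\bigcup\{K\in\mathcal T_h:\overline K\cap\partial D\neq\emptyset\}$, of width $O(h)$. Orthogonality gives $a(\boldsymbol\eta,{\boldsymbol y_h})=a(\boldsymbol\eta,{\boldsymbol y_h}-\tilde{\boldsymbol y}_h)$. Using the continuity of $a(\cdot,\cdot)$ over $S_h$ (from \eqref{sigmaugv}), a standard inverse inequality on the finite element function ${\boldsymbol y_h}-\tilde{\boldsymbol y}_h$, and the local stability $\Vert{\boldsymbol y_h}-\tilde{\boldsymbol y}_h\Vert_{0,S_h}\le C\Vert{\boldsymbol y_h}\Vert_{0,D}$, I obtain
\[
\vert a(\boldsymbol\eta,{\boldsymbol y_h})\vert \le C\Vert\boldsymbol\eta\Vert_{1,S_h}\Vert{\boldsymbol y_h}-\tilde{\boldsymbol y}_h\Vert_{1,S_h}\le C\,h^{-1}\Vert\boldsymbol\eta\Vert_{1,S_h}\Vert{\boldsymbol y_h}\Vert_{0,D}.
\]
Thus the statement reduces to the localized projection estimate $\Vert\boldsymbol\eta\Vert_{1,S_h}\le C(\epsilon)\,h^{3/2-\epsilon}\Vert\boldsymbol\psi\Vert_{3,D}$.

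To reach this, split $\boldsymbol\eta=(\boldsymbol\psi-I_h\boldsymbol\psi)-(P_{0,h}\boldsymbol\psi-I_h\boldsymbol\psi)$, with $I_h$ the $(P_1)^2$ Lagrange interpolant. For the interpolation part, local $P_1$ estimates give $\Vert\boldsymbol\psi-I_h\boldsymbol\psi\Vert_{1,S_h}\le C\,h\,\vert\boldsymbol\psi\vert_{2,S_h}$; since $\boldsymbol\psi\in(H^3(D))^2$ its second derivatives belong to $H^1(D)$, and the strip-localization inequality $\Vert v\Vert_{0,S_h}\le C\,h^{1/2}\Vert v\Vert_{1,D}$ (valid for $v\in H^1(D)$ because $S_h$ has width $O(h)$ and $v$ has an $L^2(\partial D)$ trace) yields $\vert\boldsymbol\psi\vert_{2,S_h}\le C\,h^{1/2}\Vert\boldsymbol\psi\Vert_{3,D}$, hence $\Vert\boldsymbol\psi-I_h\boldsymbol\psi\Vert_{1,S_h}\le C\,h^{3/2}\Vert\boldsymbol\psi\Vert_{3,D}$. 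For the discrete part $\boldsymbol\zeta_h:=P_{0,h}\boldsymbol\psi-I_h\boldsymbol\psi\in V_h^0$, an inverse inequality on $S_h$ gives $\Vert\boldsymbol\zeta_h\Vert_{1,S_h}\le C\,h^{-1}\Vert\boldsymbol\zeta_h\Vert_{0,S_h}$, and then $\Vert\boldsymbol\zeta_h\Vert_{0,S_h}\le\Vert\boldsymbol\eta\Vert_{0,S_h}+\Vert\boldsymbol\psi-I_h\boldsymbol\psi\Vert_{0,S_h}$ reduces everything to a localized $L^2$ bound of the elliptic-projection error on the strip.

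The hard part is precisely this last localized $L^2$ estimate. The global Aubin--Nitsche bound $\Vert\boldsymbol\eta\Vert_{0,D}\le C\,h^2\Vert\boldsymbol\psi\Vert_{2,D}$ does not suffice, since it fails to capture the extra half power generated by the thinness of $S_h$; recovering $\Vert\boldsymbol\eta\Vert_{0,S_h}\le C(\epsilon)\,h^{5/2-\epsilon}\Vert\boldsymbol\psi\Vert_{3,D}$ requires a weighted- or maximum-norm (Schatz--Wahlbin type) stability analysis of $P_{0,h}$, and it is exactly this refined local argument that produces the logarithmic factor absorbed into the arbitrarily small loss $h^{-\epsilon}$. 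This is the elasticity counterpart of Lemma~III.3.1 in \cite{Girault1986}, and I expect it to be the technical bottleneck. Combining the two contributions gives $\Vert\boldsymbol\eta\Vert_{1,S_h}\le C(\epsilon)\,h^{3/2-\epsilon}\Vert\boldsymbol\psi\Vert_{3,D}$; inserting this into the displayed inequality, dividing by $\Vert{\boldsymbol y_h}\Vert_{0,D}$ and taking the supremum over ${\boldsymbol y_h}\in V_h$ completes the proof.
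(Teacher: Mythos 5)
Your localization is the right structural move, and it is the same first step as in the argument the paper leans on (Lemma III.3.2 of Girault--Raviart): use $a(\boldsymbol\eta,\boldsymbol z_h)=0$ for $\boldsymbol z_h\in V_h^0$ to reduce everything to the boundary strip $S_h$, where the interpolation part indeed gives $Ch^{3/2}\Vert\boldsymbol\psi\Vert_{3,D}$ with no loss. But your proof has a genuine gap at exactly the step you flag: the localized projection estimate $\Vert\boldsymbol\eta\Vert_{0,S_h}\le C(\epsilon)h^{5/2-\epsilon}\Vert\boldsymbol\psi\Vert_{3,D}$ (equivalently $\Vert\boldsymbol\eta\Vert_{1,S_h}\le C(\epsilon)h^{3/2-\epsilon}\Vert\boldsymbol\psi\Vert_{3,D}$) is asserted, not proved, and it is the whole content of the lemma. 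Worse, the route you propose for it is harder than it needs to be and does not obviously close: a maximum-norm bound of Schatz--Wahlbin type would require $\boldsymbol\psi\in (W^{2,\infty})^2$, which does \emph{not} follow from $\boldsymbol\psi\in (H^3)^2$ in two dimensions, and genuinely local energy estimates on an $h$-dependent strip touching the (polygonal) boundary are delicate near corners. So as written the decisive inequality is neither established nor reduced to a citable result.

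The standard way to extract the factor $h^{1/2-\epsilon}$ --- and the reason the exponent has the form $\tfrac12-\tfrac1p$ in the reference, which the paper then converts via $H^3\hookrightarrow W^{2,p}$ and $p=1/\epsilon$ --- is different from yours in one key respect: one does not localize the projection error in $L^2$ at all. Instead, after the reduction $a(\boldsymbol\eta,\boldsymbol y_h)=a(\boldsymbol\eta,\boldsymbol y_h-\tilde{\boldsymbol y}_h)$ one applies H\"older with exponents $p$ and $p'$ on $S_h$, uses the \emph{global} $W^{1,p}$ error estimate $\Vert\nabla\boldsymbol\eta\Vert_{L^p(D)}\le Ch\vert\boldsymbol\psi\vert_{2,p,D}$ for the elliptic projection, and gains the extra power from the discrete test function: by an inverse inequality and the norm equivalence for piecewise polynomials on the $O(h)$-measure strip, $\Vert\nabla(\boldsymbol y_h-\tilde{\boldsymbol y}_h)\Vert_{L^{p'}(S_h)}\le Ch^{-1}h^{\frac12-\frac1p}\Vert\boldsymbol y_h\Vert_{0,D}$. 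Multiplying gives $Ch^{\frac12-\frac1p}\vert\boldsymbol\psi\vert_{2,p,D}\Vert\boldsymbol y_h\Vert_{0,D}$ directly. If you want to rescue your all-$L^2$ formulation, the same mechanism supplies your missing estimate, since $\Vert\boldsymbol\eta\Vert_{0,S_h}\le |S_h|^{\frac12-\frac1p}\Vert\boldsymbol\eta\Vert_{L^p(D)}\le Ch^{\frac52-\frac1p}\Vert\boldsymbol\psi\Vert_{2,p,D}$; but either way the indispensable ingredient is the $L^p$/$W^{1,p}$ theory of the Ritz projection (here for the Lam\'e system on a convex polygon) together with the Sobolev embedding $H^3\hookrightarrow W^{2,p}$, and your write-up contains neither. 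For what it is worth, the paper itself does not prove the lemma either --- it cites Girault--Raviart and only adds the embedding step --- but your sketch replaces the citable argument with a harder, unproven local estimate.
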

\begin{proof}
Analogous to Lemma \uppercase\expandafter{\romannumeral3}.3.2 of \cite{Girault1986} and combining the standard error estimate of the projection operator $P_{0,h}$, we can prove that, for a real number $p\geq 2$,
\begin{equation}
\sup\limits_{{\boldsymbol y_h}\neq0\in V_h}\frac{\vert a({\boldsymbol \psi}-P_{0,h}{\boldsymbol \psi},{\boldsymbol y_h})\vert}{\Vert {\boldsymbol y_h}\Vert_{0,D}}\leq C h^{\frac{1}{2}-\frac{1}{p}}\Vert {\boldsymbol \psi}\Vert_{2,p,D}.
\end{equation}
The Sobolev's embedding theorem (c.f. Theorem \uppercase\expandafter{\romannumeral1}.1.3 of \cite{Girault1986} ) implies that $H^3(D)\hookrightarrow H^{2,p}(D)$. Further, it's easy to verify that $(H^3(D))^2\hookrightarrow (H^{2,p}(D))^2$. Then, it yields that $\Vert {\boldsymbol \psi}\Vert_{2,p,D}\leq C(p)\Vert {\boldsymbol \psi}\Vert_{3,D}$.
Taking $p=\frac{1}{\epsilon}$, the proof is complete.
\end{proof}
The above three lemmas lead to the following result.
\begin{lemma}\label{ppppppppp}
Given $({\boldsymbol f},{\boldsymbol g})\in (H_0^1(D))^2\times (L^2(D))^2$, let $({\boldsymbol w}, {\boldsymbol p})\in (H_0^1(D))^2\times (H^1(D))^2$ be the solution of \eqref{SourceWeakFormulation} and $({\boldsymbol w_h}, {\boldsymbol p_h})\in V_h^0\times V_h$ be the numerical solution of \eqref{SourceWeakDiscreteFormulation}. Then, the following error estimate holds
\begin{equation}
\parallel {\boldsymbol w}-{\boldsymbol w_h}\parallel_{1,D}+\parallel {\boldsymbol p}-{\boldsymbol p_h}\parallel_{0,D}\leq
C\parallel {\boldsymbol w}-P_{0,h}{\boldsymbol w}\parallel_{1,D}
+ C\parallel {\boldsymbol p}-P_h{\boldsymbol p}\parallel_{0,D}
+C(\epsilon) h^{\frac{1}{2}-\epsilon}\Vert {\boldsymbol \psi}\Vert_{3,D}.\\ \nonumber
\end{equation}
\end{lemma}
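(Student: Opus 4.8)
The plan is to obtain the stated bound by chaining the three preceding lemmas, with the true solution pair $(\boldsymbol w,\boldsymbol p)$ playing the role of $(\boldsymbol\psi,\boldsymbol\varphi)$ and the two projection operators $P_{0,h}$, $P_h$ supplying concrete near-best approximants inside the infima. No genuinely new estimate is required; the content lies entirely in correctly matching the quantities produced by each lemma and in selecting the right discrete functions.

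First I would invoke Lemma \ref{lemma3333} directly, which already reduces the total error to
\[
\inf_{(\boldsymbol\psi_h,\boldsymbol\varphi_h)\in V_h(\boldsymbol f)}\big(\|\boldsymbol w-\boldsymbol\psi_h\|_{1,D}+\|\boldsymbol p-\boldsymbol\varphi_h\|_{0,D}\big)+C\|\boldsymbol p-P_h\boldsymbol p\|_{0,D}.
\]
The second term is already of the desired form, so the remaining work is to bound the $V_h(\boldsymbol f)$-infimum. For this I would first observe that the pair $(\boldsymbol w,\boldsymbol p)$ itself lies in $V(\boldsymbol f)$: comparing the definition of $V(\boldsymbol f)$ with the first equation of the source problem \eqref{SourceWeakFormulation} shows exactly this membership, noting $\boldsymbol p\in(H^1(D))^2\subset(L^2(D))^2$. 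Hence Lemma \ref{lemma4444} applies with $(\boldsymbol\psi,\boldsymbol\varphi)=(\boldsymbol w,\boldsymbol p)$, bounding the infimum over $V_h(\boldsymbol f)$ by
\[
C\inf_{(\boldsymbol\theta_h,\boldsymbol z_h)\in V_h^0\times V_h}\Big[\big(\|\boldsymbol w-\boldsymbol\theta_h\|_{1,D}+\|\boldsymbol p-\boldsymbol z_h\|_{0,D}\big)+\sup_{\boldsymbol y_h\neq 0\in V_h}\frac{|a(\boldsymbol w-\boldsymbol\theta_h,\boldsymbol y_h)|}{\|\boldsymbol y_h\|_{0,D}}\Big].
\]

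Since this is an infimum, I am free to choose any admissible $(\boldsymbol\theta_h,\boldsymbol z_h)$; taking $\boldsymbol\theta_h=P_{0,h}\boldsymbol w\in V_h^0$ and $\boldsymbol z_h=P_h\boldsymbol p\in V_h$ turns the first two summands into exactly $\|\boldsymbol w-P_{0,h}\boldsymbol w\|_{1,D}$ and $\|\boldsymbol p-P_h\boldsymbol p\|_{0,D}$, while the supremum term becomes precisely the quantity estimated in Lemma \ref{lemma5555}, applied with $\boldsymbol\psi=\boldsymbol w$. The regularity required there, $\boldsymbol w\in(H^3(D))^2\cap(H_0^1(D))^2$, is guaranteed by Lemma \ref{regulazing} (indeed $\boldsymbol w\in(H^3(D))^2\cap V\subset(H^3(D))^2\cap(H_0^1(D))^2$), so Lemma \ref{lemma5555} yields the bound $C(\epsilon)h^{1/2-\epsilon}\|\boldsymbol w\|_{3,D}$. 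Substituting the three resulting bounds back into the chain produces exactly the asserted estimate, with $\boldsymbol\psi=\boldsymbol w$ understood in the last term. The only point demanding care — rather than a real obstacle — is the membership $(\boldsymbol w,\boldsymbol p)\in V(\boldsymbol f)$ together with the $H^3$-regularity of $\boldsymbol w$ that licenses Lemma \ref{lemma5555}; both are already supplied by the well-posedness analysis, so the argument is a clean concatenation.
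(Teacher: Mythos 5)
Your proposal is correct and follows exactly the paper's argument: the paper's proof is the one-line statement that one chooses $\boldsymbol{\theta}_h=P_{0,h}\boldsymbol{w}$ and $\boldsymbol{z}_h=P_h\boldsymbol{p}$ in Lemma \ref{lemma4444} and then combines Lemmas \ref{lemma3333}, \ref{lemma4444} and \ref{lemma5555}. You simply spell out the details the paper leaves implicit (the membership $(\boldsymbol{w},\boldsymbol{p})\in V(\boldsymbol{f})$ and the $H^3$-regularity of $\boldsymbol{w}$ from Lemma \ref{regulazing} needed to invoke Lemma \ref{lemma5555}, with $\boldsymbol{\psi}=\boldsymbol{w}$ in the final term), which is a faithful and slightly more careful rendering of the same proof.
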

\begin{proof}
Typically, we choose ${\boldsymbol \theta_h}=P_{0,h}{\boldsymbol w}$ and ${\boldsymbol z_h}=P_{h}{\boldsymbol p}$ in \eqref{ffffff}, then lemmas \ref{lemma3333}, \ref{lemma4444} and \ref{lemma5555} yield the result.
\end{proof}
\begin{lemma} \label{operatorcon}
Let $T_h\ (h>0)$ be a family of operators defined by \eqref{Thhhhhh} and $T$ defined by \eqref{TTTTTT}. Then, it follows that $\lim\limits_{h\rightarrow 0}T_h=T$, i.e. $\Vert T-T_h\Vert\rightarrow 0\  (h\rightarrow 0)$.
\end{lemma}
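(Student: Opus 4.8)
The plan is to read off the operator-norm bound directly from the source-problem error estimate already established in Lemma \ref{ppppppppp}. By definition of $T$ and $T_h$, the operator norm on $(H_0^1(D))^2\times(L^2(D))^2$ is
\[
\|T-T_h\|=\sup_{(\boldsymbol f,\boldsymbol g)\neq 0}\frac{\|\boldsymbol w-\boldsymbol w_h\|_{1,D}+\|\boldsymbol p-\boldsymbol p_h\|_{0,D}}{\|\boldsymbol f\|_{1,D}+\|\boldsymbol g\|_{0,D}},
\]
where $(\boldsymbol w,\boldsymbol p)=T(\boldsymbol f,\boldsymbol g)$ and $(\boldsymbol w_h,\boldsymbol p_h)=T_h(\boldsymbol f,\boldsymbol g)$. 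It therefore suffices to bound the numerator by $C(\epsilon)h^{1/2-\epsilon}$ times the denominator, uniformly in $(\boldsymbol f,\boldsymbol g)$, and then let $h\to 0$.

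First I would control the two projection terms on the right-hand side of Lemma \ref{ppppppppp}. Since $P_{0,h}$ is the $a(\cdot,\cdot)$-elliptic projection onto $V_h^0$, its quasi-optimality together with the approximation property of the $(P_1)^2$ space gives $\|\boldsymbol w-P_{0,h}\boldsymbol w\|_{1,D}\le Ch\|\boldsymbol w\|_{2,D}$, and the standard $L^2$-estimate for $P_h$ gives $\|\boldsymbol p-P_h\boldsymbol p\|_{0,D}\le Ch\|\boldsymbol p\|_{1,D}$. Absorbing these into the consistency term $C(\epsilon)h^{1/2-\epsilon}\|\boldsymbol w\|_{3,D}$ of Lemma \ref{ppppppppp}, and using that $h\le h^{1/2-\epsilon}$ for small $h$ when $\epsilon\in(0,\tfrac12)$, one obtains
\[
\|\boldsymbol w-\boldsymbol w_h\|_{1,D}+\|\boldsymbol p-\boldsymbol p_h\|_{0,D}\le C(\epsilon)\,h^{1/2-\epsilon}\bigl(\|\boldsymbol w\|_{3,D}+\|\boldsymbol p\|_{1,D}\bigr).
\]

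The remaining ingredient is an a priori bound converting the solution norms into data norms, and this is the step I expect to carry the analytical weight. Following the well-posedness argument of Lemma \ref{regulazing}, the coercivity of the auxiliary problem \eqref{auxiliaryequation} controls $\|\boldsymbol w\|_{2,D}$ by $\|\boldsymbol f\|_{1,D}+\|\boldsymbol g\|_{0,D}$; the fourth-order elliptic-regularity estimate for \eqref{A} on the convex domain $D$, valid when $\rho_0-\rho_1$ is smooth, upgrades this to $\|\boldsymbol w\|_{3,D}\le C(\|\boldsymbol f\|_{1,D}+\|\boldsymbol g\|_{0,D})$; and the defining relation \eqref{B} then yields $\|\boldsymbol p\|_{1,D}\le C(\|\boldsymbol f\|_{1,D}+\|\boldsymbol g\|_{0,D})$. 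Substituting gives
\[
\|\boldsymbol w-\boldsymbol w_h\|_{1,D}+\|\boldsymbol p-\boldsymbol p_h\|_{0,D}\le C(\epsilon)\,h^{1/2-\epsilon}\bigl(\|\boldsymbol f\|_{1,D}+\|\boldsymbol g\|_{0,D}\bigr),
\]
so that $\|T-T_h\|\le C(\epsilon)\,h^{1/2-\epsilon}\to 0$ as $h\to 0$.

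The hard part will be pinning down the a priori estimate $\|\boldsymbol w\|_{3,D}+\|\boldsymbol p\|_{1,D}\le C(\|\boldsymbol f\|_{1,D}+\|\boldsymbol g\|_{0,D})$ \emph{with a constant independent of the data}: the earlier lemmas assert only $H^3\times H^1$ membership, whereas the uniform operator-norm conclusion requires the stability constant to be controlled, which rests on elliptic regularity for the fourth-order operator on the convex domain and on the smoothness of $\rho_0-\rho_1$. Everything else is a routine assembly of Lemmas \ref{lemma3333}, \ref{lemma4444} and \ref{lemma5555}; in particular the convergence rate $h^{1/2-\epsilon}$ is inherited entirely from the consistency term of Lemma \ref{lemma5555}, reflecting the loss intrinsic to this Ciarlet--Raviart-type mixed formulation.
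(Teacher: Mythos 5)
Your proposal is correct and follows essentially the same route as the paper: the paper's proof likewise writes $\Vert T-T_h\Vert$ as the supremum over data, invokes Lemma \ref{ppppppppp}, and appeals to the standard estimates for $P_h$ and $P_{0,h}$. You go further than the paper in one useful respect, namely making explicit the uniform a priori bound $\Vert {\boldsymbol w}\Vert_{3,D}+\Vert {\boldsymbol p}\Vert_{1,D}\leq C(\Vert {\boldsymbol f}\Vert_{1,D}+\Vert {\boldsymbol g}\Vert_{0,D})$ needed to pass from the pointwise error estimate to operator-norm convergence, a step the paper's two-line proof leaves implicit.
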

\begin{proof}
Given $({\boldsymbol f},{\boldsymbol g})\in (H_0^1(D))^2\times (L^2(D))^2$, let $({\boldsymbol w},{\boldsymbol p})=T({\boldsymbol f},{\boldsymbol g})$ and $({\boldsymbol w_h},{\boldsymbol p_h})=T_h({\boldsymbol f},{\boldsymbol g})$. Due to the definition of operator norm, we have
\begin{equation}
\Vert T-T_h\Vert
=\sup\limits_{({\boldsymbol f},{\boldsymbol g})\in (H_0^1(D))^2\times (L^2(D))^2}\frac{\Vert(T-T_h)({\boldsymbol f},{\boldsymbol g})\Vert}{\Vert({\boldsymbol f},{\boldsymbol g})\Vert}
=\sup\limits_{({\boldsymbol f},{\boldsymbol g})\in (H_0^1(D))^2\times (L^2(D))^2}\frac{\Vert({\boldsymbol w}-{\boldsymbol w_h},{\boldsymbol p}-{\boldsymbol p_h})\Vert}{\Vert({\boldsymbol f},{\boldsymbol g})\Vert}
\end{equation}
Using Lemma \ref{ppppppppp}, the proof is complete with the help of the standard error estimates for $P_h$ and $P_{0,h}$ \cite{Girault1986}.
\end{proof}

Let $\lambda$ be a nonzero eigenvalue of $T$ with algebraic multiplicity $m$, i.e. $\lambda\in\sigma(T)$. Lemma \ref{operatorcon}  tells us that for $h$ sufficiently small, there exist exactly $m$ eigenvalues $\lambda_h^k\ (k=1,\cdots,m)$ of $T_h$ such that $\lambda^k_h\rightarrow\lambda \ (h\rightarrow0)$.  Define the direct sum of the spaces of generalized eigenvectors corresponding to $\lambda_h^k(k=1,\cdots,m)$ as $W_h$, $\lambda_h(k=1,\cdots,m)$ as $W$.
The spectral theory for compact operators \cite{BabuskaOsborn1991,Osborn1975MC} gives the following theorem.
\begin{theorem}
Denote $\hat{\delta}(W,W_h)$ the gap between $W$ and $W_h$ by
\begin{equation}
\hat{\delta}(W,W_h)=\max(\delta(W,W_h),\delta(W_h,W))\ \ \ \  \ \nonumber
\end{equation}
here
\begin{equation}
\delta(W,W_h)=\sup\limits_{ \substack{({\boldsymbol \psi},{\boldsymbol \varphi})\in W,\\  \Vert{\boldsymbol \psi}\Vert_{1,D}^2+\Vert{\boldsymbol \varphi}\vert_{0,D}^2=1}}\Big[\inf\limits_{({\boldsymbol \psi_h},{\boldsymbol \varphi_h})\in W_h}\big(\Vert{\boldsymbol \psi}-{\boldsymbol \psi_h}\Vert_{1,D}^2+\Vert{\boldsymbol \varphi}-{\boldsymbol \varphi_h}\Vert_{0,D}^2\big)^{\frac{1}{2}}\Big],  \nonumber
\end{equation}
and $\delta(W_h,W)$ follows similarly. Then, $\hat{\delta}(W,W_h)\rightarrow 0$ as $h\rightarrow 0$.
\end{theorem}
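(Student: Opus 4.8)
The plan is to reduce the gap estimate to the operator-norm convergence $\Vert T - T_h\Vert \to 0$ established in Lemma \ref{operatorcon}, by passing through the associated Riesz spectral projections. Since $\lambda\neq 0$ is an isolated point of $\sigma(T)$ (the nonzero spectrum of a compact operator being discrete), I would choose a circle $\Gamma$ in the complex plane centered at $\lambda$, oriented counterclockwise, enclosing $\lambda$ but no other point of $\sigma(T)$, and define the spectral projection onto $W$ by
\[
E = \frac{1}{2\pi i}\oint_\Gamma (z-T)^{-1}\,dz .
\]
Because $\Gamma$ is disjoint from $\sigma(T)$, the resolvent $(z-T)^{-1}$ is bounded uniformly for $z\in\Gamma$, and the range of $E$ is exactly $W$.

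First I would show that, for $h$ small enough, the \emph{same} contour $\Gamma$ may be used for $T_h$. Writing $z-T_h = (z-T)\big(I-(z-T)^{-1}(T-T_h)\big)$ and using that $\sup_{z\in\Gamma}\Vert (z-T)^{-1}\Vert$ is finite, the factor $I-(z-T)^{-1}(T-T_h)$ is invertible uniformly in $z\in\Gamma$ once $\Vert T-T_h\Vert$ is sufficiently small, by a Neumann-series argument. Hence no eigenvalue of $T_h$ lies on $\Gamma$, the resolvent $(z-T_h)^{-1}$ is bounded uniformly on $\Gamma$ by a constant independent of $h$, and I may define $E_h = \frac{1}{2\pi i}\oint_\Gamma (z-T_h)^{-1}\,dz$. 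Its range is precisely the direct sum $W_h$ of the generalized eigenspaces of the $m$ eigenvalues $\lambda_h^k$ trapped inside $\Gamma$, the count being $m$ by Lemma \ref{operatorcon}.

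Next I would bound $\Vert E-E_h\Vert$. From the resolvent identity $(z-T)^{-1}-(z-T_h)^{-1}=(z-T)^{-1}(T-T_h)(z-T_h)^{-1}$ and the uniform resolvent bounds on $\Gamma$, integrating over $\Gamma$ gives
\[
\Vert E-E_h\Vert \le \frac{|\Gamma|}{2\pi}\Big(\sup_{z\in\Gamma}\Vert (z-T)^{-1}\Vert\Big)\Big(\sup_{z\in\Gamma}\Vert (z-T_h)^{-1}\Vert\Big)\Vert T-T_h\Vert \le C\Vert T-T_h\Vert ,
\]
which tends to zero by Lemma \ref{operatorcon}. To convert this into the gap bound, I use that $E$ and $E_h$ are projections with ranges $W$ and $W_h$: for any $({\boldsymbol \psi},{\boldsymbol \varphi})\in W$ we have $E({\boldsymbol \psi},{\boldsymbol \varphi})=({\boldsymbol \psi},{\boldsymbol \varphi})$ while $E_h({\boldsymbol \psi},{\boldsymbol \varphi})\in W_h$, so the infimum defining $\delta(W,W_h)$ is at most $\Vert (E-E_h)({\boldsymbol \psi},{\boldsymbol \varphi})\Vert\le \Vert E-E_h\Vert$ on unit vectors; taking the supremum yields $\delta(W,W_h)\le \Vert E-E_h\Vert$, and interchanging the roles of $E$ and $E_h$ gives $\delta(W_h,W)\le \Vert E-E_h\Vert$ in the same way. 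Therefore $\hat\delta(W,W_h)\le \Vert E-E_h\Vert\le C\Vert T-T_h\Vert\to 0$.

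The main obstacle is the uniform resolvent bound: I must guarantee that $\sup_{z\in\Gamma}\Vert (z-T_h)^{-1}\Vert$ remains bounded as $h\to 0$, which is exactly what makes the single contour $\Gamma$ admissible for all small $h$ simultaneously and what forces $\dim W_h = \dim W = m$. This is the precise point where norm convergence of $T_h$ to $T$, rather than merely pointwise or strong convergence, is indispensable, and it is the content of the abstract spectral approximation theory of \cite{BabuskaOsborn1991,Osborn1975MC}.
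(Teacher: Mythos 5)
Your proposal is correct and follows essentially the same route as the paper: the paper simply invokes the spectral approximation theory of compact operators from \cite{BabuskaOsborn1991,Osborn1975MC} with the norm convergence $\Vert T-T_h\Vert\to 0$ of Lemma \ref{operatorcon} as the only input, and your contour-integral/Riesz-projection argument is precisely the standard proof of that cited abstract result. You have merely unpacked the machinery the paper leaves to the references, with the key point (uniform resolvent bounds on $\Gamma$ forcing $\dim W_h=\dim W=m$ and $\Vert E-E_h\Vert\le C\Vert T-T_h\Vert$) correctly identified.
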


\section{Numerical Examples}\label{Experiments}

In this section, we present some numerical results using three domains:
a disk with radius $R = 1/2$, the unit square and an L-shaped domain given by $(0,1)\times (0,1) \setminus [1/2, 1]\times[1/2,1]$.
Five levels of uniformly refined triangular meshes are generated for numerical experiments. The mesh size of initial mesh is $h_1=0.1$
and $h_i=h_{i-1}/2, i=2,3,4,5$.
Note that further refinement would lead to very large matrix eigenvalue problems which take too long to solve.
All examples are done using Matlab 2016a on a MacBook Pro with 16G memory and 3.3GHz Intel Core i7 processor.

Other parameters are chosen as follows
\begin{equation}\label{parameters}
\mu = 1/16, \quad \lambda = 1/4, \quad \rho_0 = 1, \quad \rho_1 = 4.
\end{equation}
The relative error is defined as
\[
E_{i+1} = \frac{|\Lambda_{i+1}-\Lambda_i|}{|\Lambda_i|},\quad i=1,2,3,4,
\]
where $\Lambda_{i}$ is the eigenvalue computed using the mesh with size $h_i$.
Then the convergence order is simply
\begin{align}
\mbox{convergence order}=\log_2{\frac{E_{i+1}} {E_{i+2}}},\quad \ i=1,2,3.
\end{align}

We present the results of the first several transmission eigenvalues.
Table~\ref{tablemix1} gives the computed eigenvalues and the convergence orders of the first real transmission eigenvalues of three
domains using the mixed method. It can be seen that the convergence rate for the unit square is approximately
2 indicating that the associated eigenfunction $u\in H^3(D)$. The convergence rate for the L-shaped domain is
lower, which is likely caused by the low regularity of the eigenfunction. Similar results can be observed
for the biharmonic eigenvalue problem (see Chap. 4 of \cite{SunZhou2016}). These results are consistent with the results in \cite{Ji2017} by noting that $\omega^2$ are given in \cite{Ji2017}.
 Table~\ref{tablemix2} gives the second real eigenvalues and convergence orders of three domains.  Table~\ref{tablemixc} gives the first complex eigenvalues.

\begin{table}
\begin{center}
\begin{tabular}{lllllllllll}
\hline
$h$&Unit square &order & L-shaped &order& Circle &order \\
\hline
0.1&1.547133 & & 2.667934 & &1.653707 & \\
0.05&1.428624& & 2.338044& &1.490512 & \\
0.025&1.402599& 2.072056 &  2.242452&1.596602 &1.461834& 2.358685\\
0.0125&1.396056&1.965350 &  2.215263& 1.753641&  1.453936 & 1.832356\\
0.00625&1.394419&1.992152 &  2.207749& 1.837771&  1.451948 & 1.982354\\
\hline
\end{tabular}
\caption{The first real transmission eigenvalue of the mixed method $\mu = 1/16, \lambda = 1/4, \rho_0 = 1, \rho_1 = 4$.}
\label{tablemix1}
\end{center}
\end{table}

 \begin{table}
\begin{center}
\begin{tabular}{lllllllllll}
\hline
$h$&Unit square &order & L-shaped &order& Circle &order \\
\hline
0.1&1.797671& & 2.818168 & &1.885692 & \\
0.05&1.661963& & 2.433875& &1.746810 & \\
0.025&1.629471& 1.949109 &  2.333153&1.720325&1.716030& 2.063423\\
0.0125&1.621129&1.933135 &  2.307061& 1.887725&  1.707564 & 1.836593\\
0.00625&1.619008&1.968244 &  2.300660& 2.011014&  1.705370 & 1.940982\\
\hline
\end{tabular}
\caption{The second real transmission eigenvalue of the mixed method $\mu = 1/16, \lambda = 1/4, \rho_0 = 1, \rho_1 = 4$.}
\label{tablemix2}
\end{center}
\end{table}

\begin{table}
\begin{center}
\begin{tabular}{lllllllllll}
\hline
$h$&Unit square &order & L-shaped &order& Circle &order \\
\hline
0.1&1.959412 - 0.287003i& & 2.068887 - 0.805506i & &2.048788 - 0.210752i & \\
0.05&1.892434 - 0.295354i& & 2.048189 - 0.760764i& &2.010480 - 0.280115i  & \\
0.025&1.873158 - 0.292942i & 1.748678 &  2.043123 - 0.749227i &1.944984 &1.994452 - 0.283584i&  2.251690\\
0.0125&1.867646 - 0.291971i&1.780619  &  2.041622 - 0.746321i &  1.939748&  1.989228 - 0.283208i
& 1.635355\\
0.00625&1.866145 - 0.291760i&1.880317 &  2.041219 - 0.745669i& 2.092299&  1.987713 - 0.283122i & 1.784153\\
\hline
\end{tabular}
\caption{The first complex transmission eigenvalue of the mixed method $\mu = 1/16, \lambda = 1/4, \rho_0 = 1, \rho_1 = 4$.}
\label{tablemixc}
\end{center}
\end{table}

From the Appendix, a radially-symmetric transmission eigenvalue of the disk is the first root of $Z_0$ defined in \eqref{eqZ0}.
Using some root finding technique, we find the smallest root $\omega=3.554954$. However, it is not the smallest transmission eigenvalue of the disk.
The mixed method also computes the transmission eigenvalues $\omega=3.555618$ with $h=0.00625$, $\omega=3.557610$ with $h=0.0125$. Convergence order is $2$.
Figure \ref{eigfunction} plots the eigenfunction ${\boldsymbol u}$ associated with this eigenvalue, which appear to be radially-symmetric.
\begin{figure}
\centering
\subfigure{\includegraphics[width=0.3\textwidth,height=0.25\textwidth]{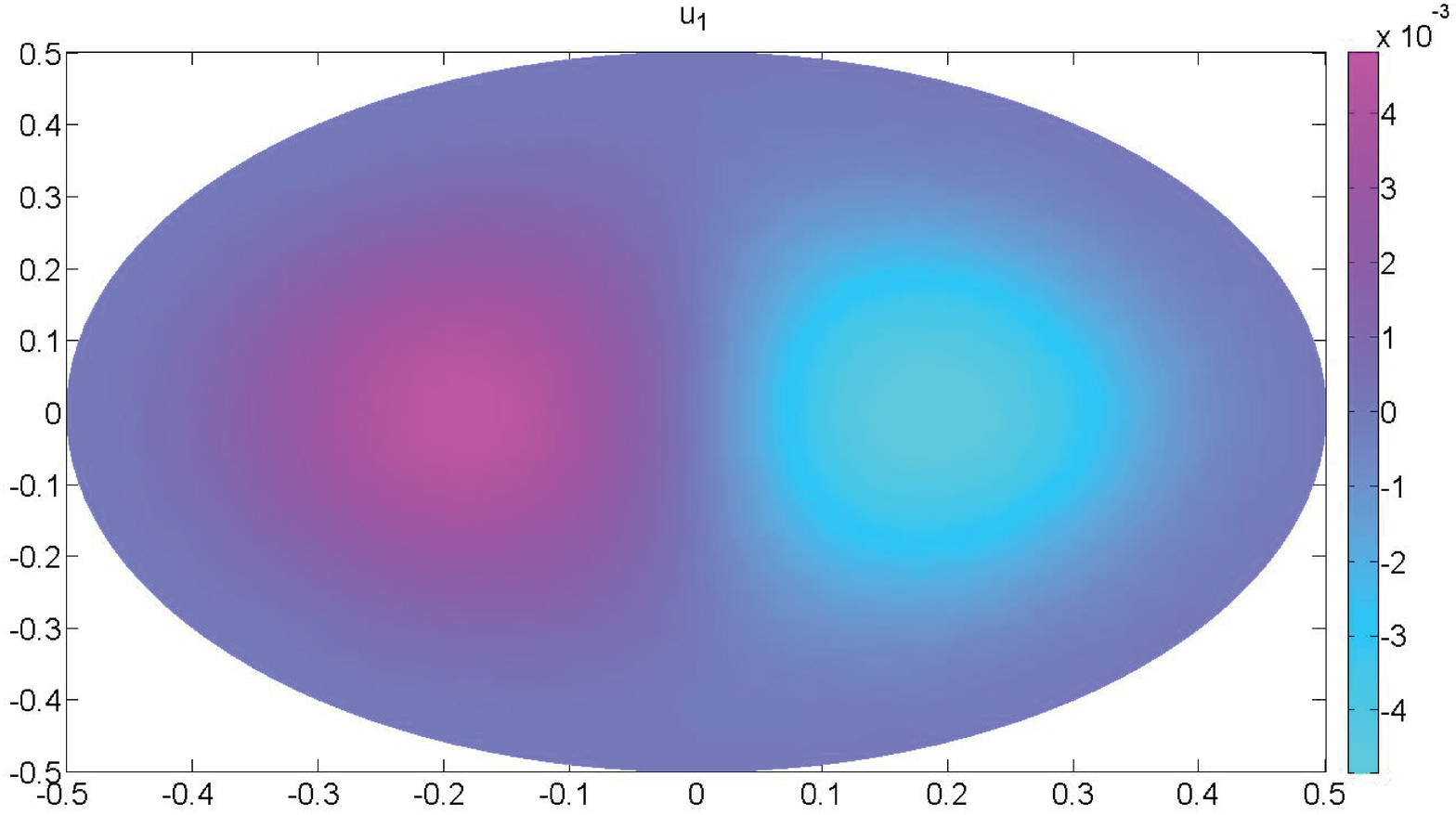}}
\subfigure{\includegraphics[width=0.3\textwidth,height=0.25\textwidth]{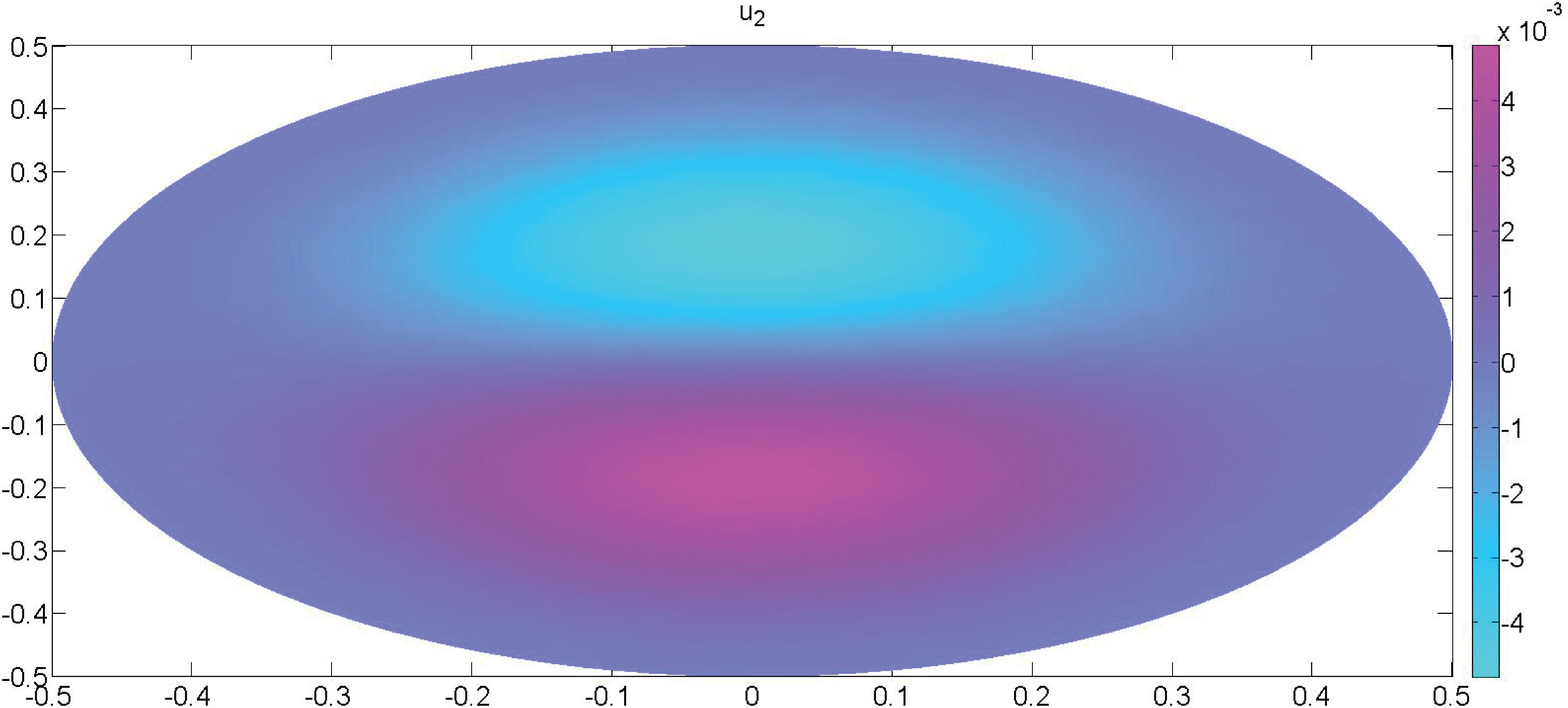}}
\subfigure{\includegraphics[width=0.3\textwidth,height=0.25\textwidth]{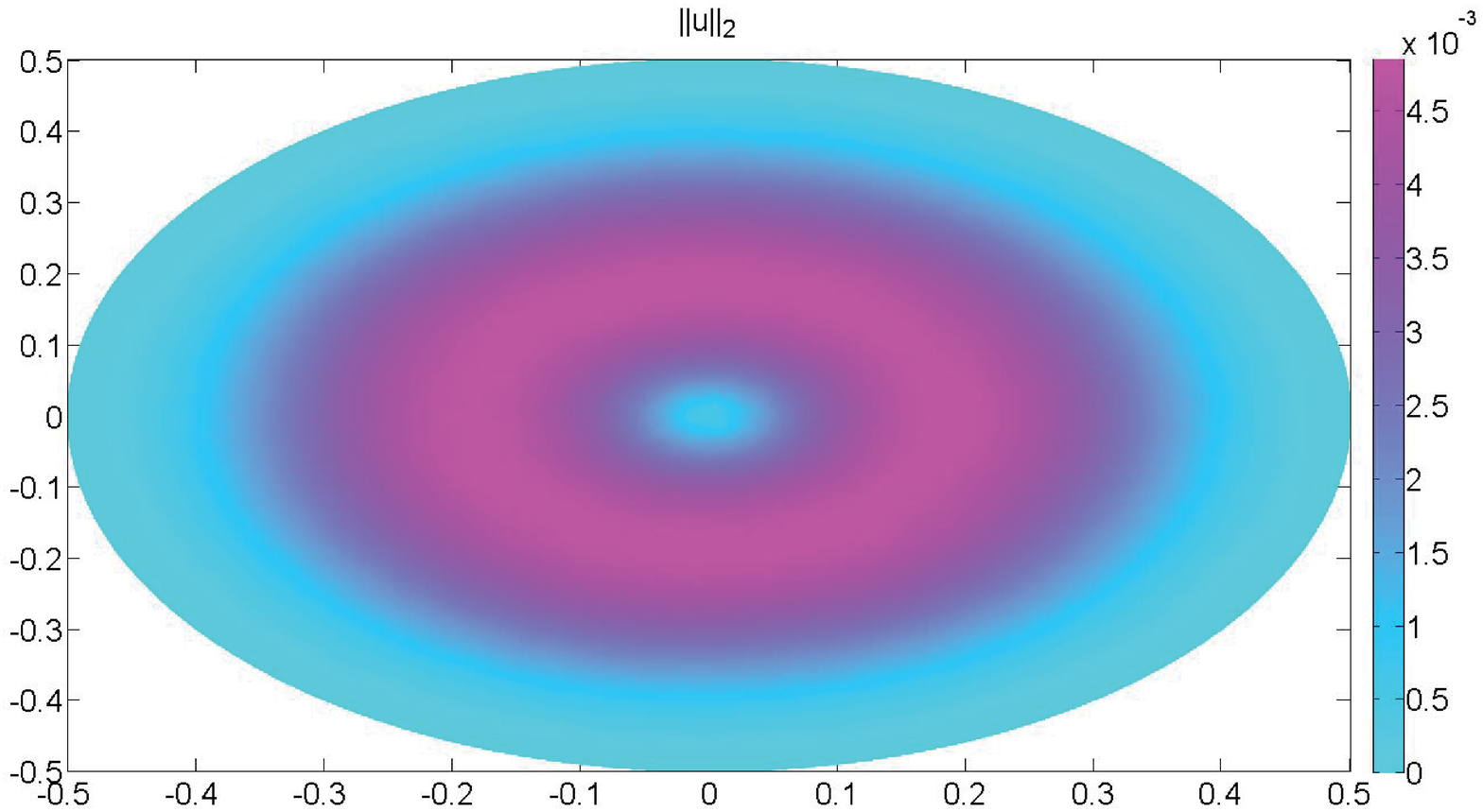}}
\caption{A radially-symmetric eigenfunction. Left: $u_1$. Middle: $u_2$. Right: $|{\boldsymbol u}=(u_1,u_2)|$.}
\label{eigfunction}
\end{figure}
Note that not all eigenfunctions are radially-symmetric. Figure \ref{secondeig} is the eigenfunction associated with the second eigenvalue. Clearly, it is not a radially-symmetric function.
\begin{figure}
\centering
\subfigure{\includegraphics[width=0.3\textwidth,height=0.25\textwidth]{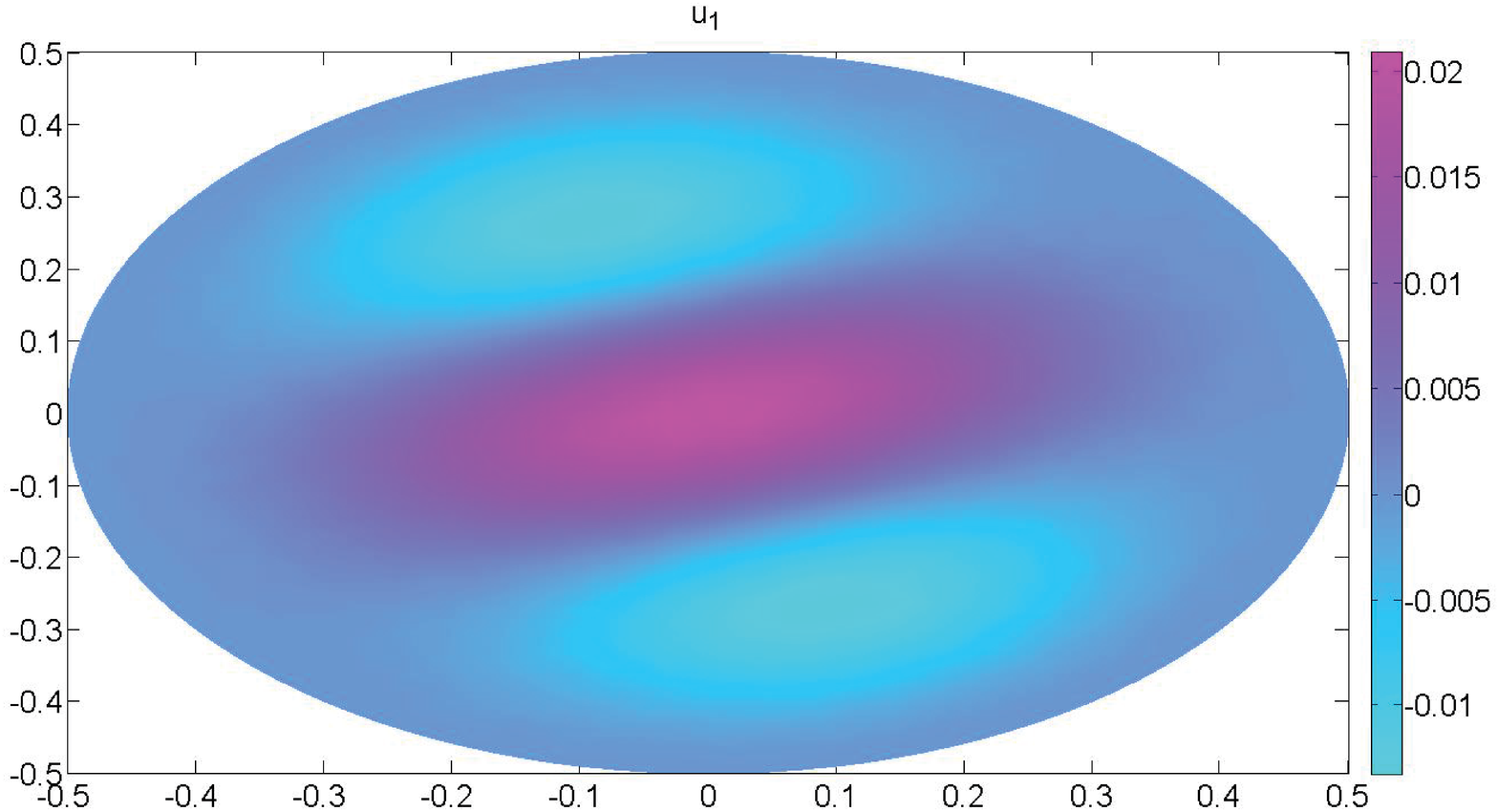}}
\subfigure{\includegraphics[width=0.3\textwidth,height=0.25\textwidth]{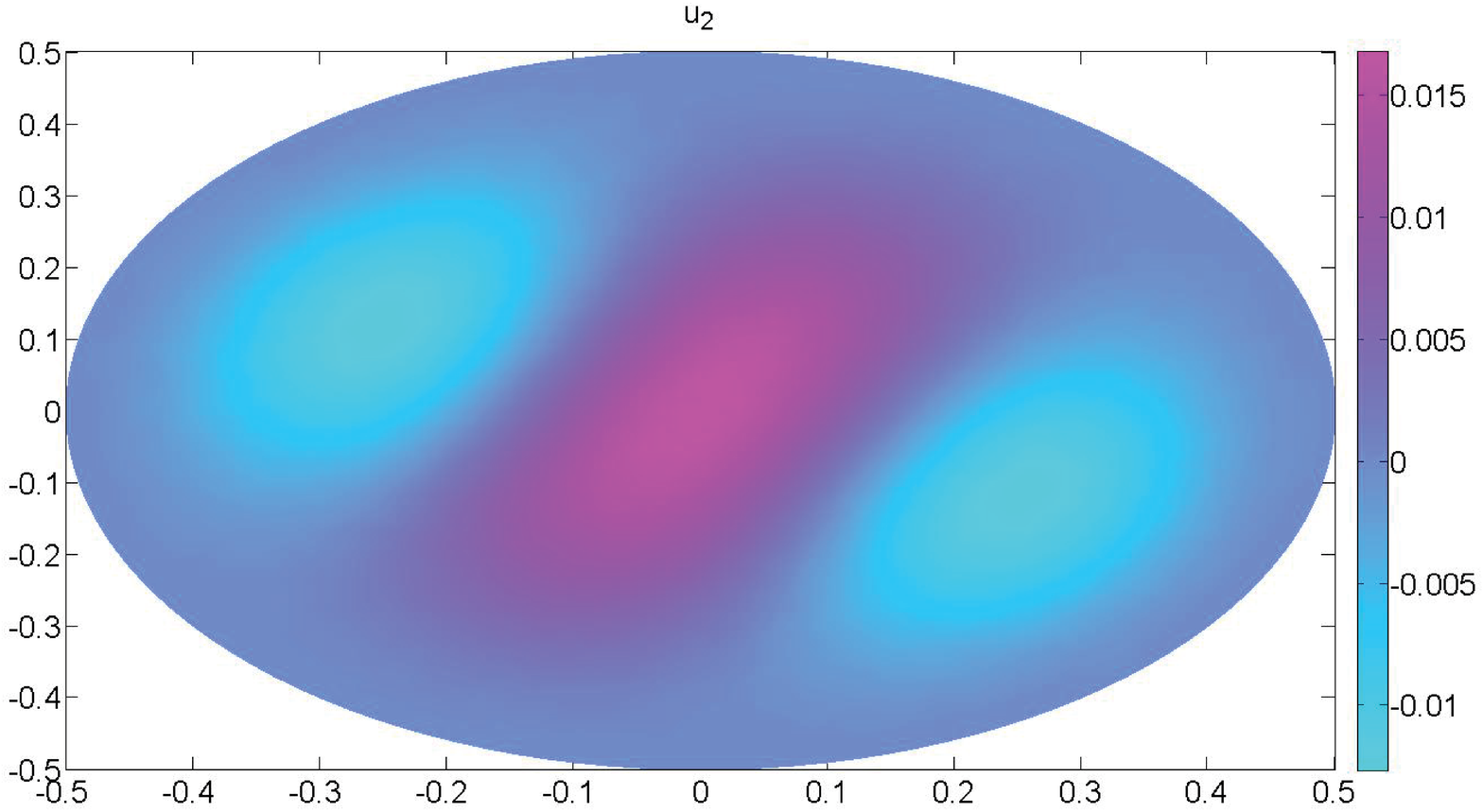}}
\subfigure{\includegraphics[width=0.3\textwidth,height=0.25\textwidth]{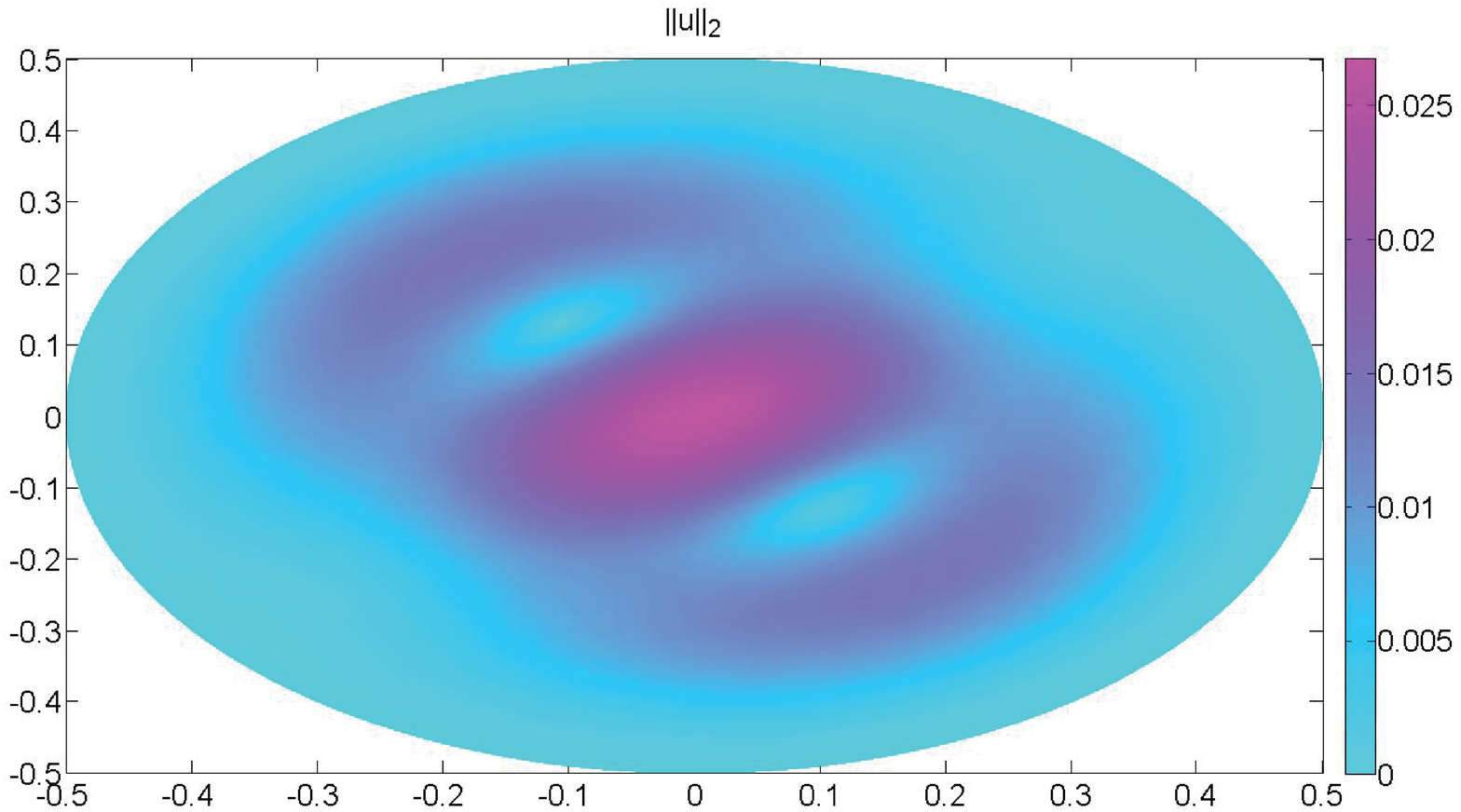}}
\caption{Second eigenfunction. Left: $u_1$. Middle: $u_2$. Right: $|{\boldsymbol u}=(u_1,u_2)|$.}
\label{secondeig}
\end{figure}

We also test the parameters
\begin{equation}\label{parameters1}
\mu = 1/4, \quad \lambda = 1/4, \quad \rho_0 = 1/20, \quad \rho_1 = 3.
\end{equation}
Table~\ref{tablefirstten} gives the first ten real eigenvalues of three domains, which is consistent with the result of $\omega^2$ in \cite{XiJi2018}. We also test the convergence order of the first
real eigenvalues, the results are given in Table~\ref{tablepara2}.

\begin{table}
\begin{center}
\begin{tabular}{cccc}
\hline
Eigenvalue & Unit square &L-shaped    &Circle     \\
\hline
$\Lambda_1$ &2.840221              &3.681961      &2.978253 \\
$\Lambda_2$ &3.092466             &4.132549      &3.359621 \\
$\Lambda_3$ &3.092482               &4.551687      &3.359652 \\
$\Lambda_4$ &3.742555              &4.768572      &3.988719\\
$\Lambda_5$ &3.742593             &4.941573      &3.988775\\
$\Lambda_6$ &3.776228              &5.153489   &4.207261\\
$\Lambda_7$ &3.890075              &5.171420      &4.207329\\
$\Lambda_8$ &4.636315              &5.297215    &4.883177\\
$\Lambda_9$ &4.538839              &5.340098     &4.943635\\
$\Lambda_{10}$&4.538879              &5.719738      &4.943363\\
\hline
\end{tabular}
\end{center}
\caption{The first ten transmission eigenvalues with $h\approx0.00625$ and $\mu = 1/4, \lambda = 1/4, \rho_0 = 1/20, \rho_1 = 3$.}
\label{tablefirstten}
\end{table}

\begin{table}
\begin{center}
\begin{tabular}{lllllllllll}
\hline
$h$&Unit square &order & L-shaped &order& Circle &order \\
\hline
0.1&2.943315& & 3.825626 & &3.090077 & \\
0.05&2.866060& & 3.718472& &3.006292 & \\
0.025&2.846493& 1.942833&  3.690525&1.897934&2.985290&1.956508\\
0.0125&2.841483&1.955657 &  3.683535& 1.988445&  2.979681 & 1.894597\\
0.00625&2.840221&1.986557 &  3.681961& 2.148121&  2.978253 & 1.971034\\
\hline
\end{tabular}
\caption{The first real transmission eigenvalue of the mixed method $\mu = 1/4, \lambda = 1/4, \rho_0 = 1/20, \rho_1 = 3$.}
\label{tablepara2}
\end{center}
\end{table}


\section*{Appendix: Radially Symmetric Case on Disks}

We derive the equation satisfied by a transmission eigenvalue whose associated eigenfunction is radially symmetric on a disk.
Let $D \subset \mathbb R^2$ be a disk with radius $R$.
Let ${\boldsymbol u}=(w, v)^\top$. Writing the elastic wave equation
\eqref{ElasticityLHS1}
component wise, we have that
\begin{eqnarray} \label{2mulambda}
(2\mu + \lambda) \frac{\partial^2 w}{\partial x_1^2}+(\lambda + \mu)\frac{\partial^2 v}{\partial x_2 \partial x_1}
				+ \mu \frac{\partial^2 w}{\partial^2 x_2}+\omega^2 \rho w = 0, \\
\label{mulambda}	   (\mu+\lambda)\frac{\partial^2 w}{\partial x_2 \partial x_1}+\mu \frac{\partial^2 v}{\partial x_1^2} +
	   (2\mu+\lambda) \frac{\partial^2 v}{\partial x_2^2}+\omega^2 \rho v = 0.
\end{eqnarray}
If we consider the solution in the form of radially-symmetric vector field
${\boldsymbol u}({\boldsymbol x}) = u(r){\boldsymbol e}_r$, where
$r=|{\boldsymbol x}|$ and ${\boldsymbol e}_r = {\boldsymbol x}/{r}$, $w = u(r)\cos \theta, v = u(r)\sin \theta$,
\eqref{2mulambda} can be written as
\[
(\mu + \lambda) \left( \frac{\partial^2 w}{\partial x_1^2}+\frac{\partial^2 v}{\partial x_2 \partial x_1}\right)
				+ \mu \left(\frac{\partial^2 w}{\partial^2 x_2}+ \frac{\partial^2 w}{\partial x_1^2}\right) +\omega^2 \rho w =0.
\]
Using polar coordinate, \eqref{2mulambda} becomes
\[
(\lambda+\mu) \left( u_{rr}  +\frac{1}{r} u_r  -\frac{1}{r^2} u   \right)
+\mu\left(u_{rr} + \frac{1}{r} u_r - \frac{1}{r^2}u\right) +\omega^2 \rho u = 0,
\]
i.e.,
\[
(\lambda+2\mu) \left( u_{rr}  +\frac{1}{r} u_r  -\frac{1}{r^2} u
\right)+\omega^2 \rho u = 0.
\]
Similarly, \eqref{mulambda} is simply
\[
(\mu+\lambda)\left(u_{rr} + \frac{1}{r}u_r -\frac{1}{r^2} u\right)+
			\mu \left(u_{rr} + \frac{1}{r}u_r - \frac{1}{r^2}u\right)
	  +\omega^2 \rho u = 0,
\]
i.e.,
\[
(2\mu+\lambda)\left(u_{rr} + \frac{1}{r}u_r -\frac{1}{r^2} u\right) +\omega^2 \rho u = 0.
\]
The above equation can be written as
\[
r^2 \frac{{\rm d}^2 u}{{\rm d}r^2}+r \frac{{\rm d}u}{{\rm d}r} + \left( r^2
\frac{\omega^2 \rho}{2\mu+\lambda}-1 \right) u = 0.
\]
The solution of the above equation is given by the Bessel function of order one
$J_1(ar)$, where
\[
a = \omega \sqrt{\frac{\rho}{2\mu + \lambda}}.
\]
Then we obtain that ${\boldsymbol u}=(w, v)^\top:=(J_1(ar)\cos \theta,
J_1(ar) \sin \theta)^\top$.

Next we look at the boundary condition involving $\sigma({\boldsymbol u}) {\boldsymbol \nu}$. For the
transmission eigenvalue problem, we assume that
\[
{\boldsymbol u} = \begin{pmatrix} J_1(a_1 r) \cos \theta \\ J_1(a_1 r) \sin \theta \end{pmatrix}, \quad
{\boldsymbol v} = C\begin{pmatrix} J_1(a_2 r) \cos \theta \\ J_1(a_2 r) \sin \theta \end{pmatrix},
\]
where $C$ is a constant to be determined from \eqref{tep} and
\[
a_1 = \omega \sqrt{\frac{\rho_1}{2\mu + \lambda}}, \quad a_2 = \omega \sqrt{\frac{\rho_2}{2\mu + \lambda}}.
\]

Note that ${\boldsymbol \nu} = (\nu_1, \nu_2)^\top = (\cos \theta, \sin
\theta)^T$. It follows from \eqref{DefSigma} that the first component of
$\sigma({\boldsymbol u}){\boldsymbol \nu}$ is
\begin{eqnarray*}
\lambda \left[ \frac{\partial }{\partial r}J(ar) + \frac{1}{r} J(ar) \right]
\cos \theta  + 2\mu \frac{\partial }{\partial r}J(ar) \cos \theta.
\end{eqnarray*}
The second component of $\sigma({\boldsymbol u}){\boldsymbol \nu}$ is
\begin{eqnarray*}
 2\mu \frac{\partial }{\partial r}J(ar)  \sin \theta + \lambda \left[
\frac{\partial }{\partial r}J(ar) + \frac{1}{r} J(ar) \right] \sin \theta.
\end{eqnarray*}

Using the boundary conditions \eqref{tep}, we obtain
\[
C=\frac{J_1(a_1 R)}{J_1(a_2 R)},
\]
and
\[
\left(2\mu \frac{\partial }{\partial r}J(a_1r)  + \lambda \left[  \frac{\partial }{\partial r}J(a_1r) + \frac{1}{r} J(a_1r) \right] \right) \Big\vert_{r=R}
= C\left(2\mu \frac{\partial }{\partial r}J(a_2r)  + \lambda \left[  \frac{\partial }{\partial r}J(a_2r) + \frac{1}{r} J(a_2r) \right] \right) \Big \vert_{r=R}.
\]
Hence $\omega$ is a transmission eigenvalue if it satisfies
\begin{equation}\label{eqZ0}
Z_0(\omega):=\left | \begin{array}{cc} J_1\left( a_1R\right) & J_1\left( a_2 R\right)  \\
a_1J_1'(a_1 R)  & a_2 J_1'(a_2 R)  \end{array}\right|=0.
\end{equation}
Figure \ref{absZ0s} is the contour plot of $|Z_0(\omega)|$ on the complex plane.

\begin{figure}
\begin{center}
{ \scalebox{0.3} {\includegraphics{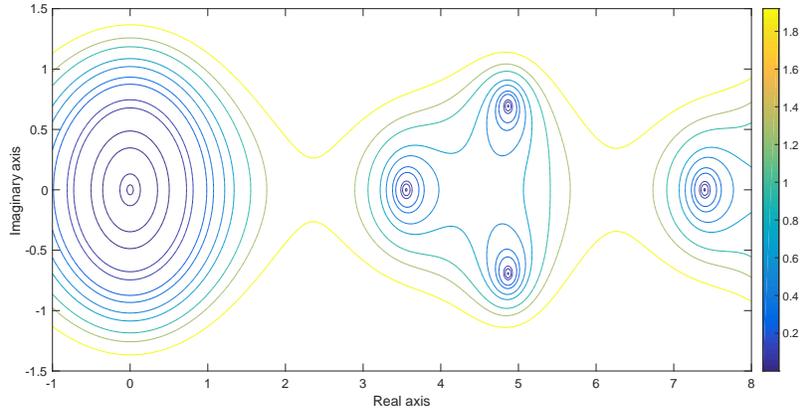}}}
\caption{The contour plot of $|Z_0(\omega)|$ with $\mu = 1/16, \lambda = 1/4,
\rho_0 = 1, \rho_1 = 4$. The centers of the circular curves indicate the
locations of transmission eigenvalues.}
 \label{absZ0s}
\end{center}
\end{figure}


\end{document}